\newtheorem{proposition}{Proposition}%[section]
\newtheorem{theorem}{Theorem}%[section]
\newtheorem{lemma}{Lemma}%[section]
\newtheorem{corollary}{Corollary}%[section]
\title{Machine learning discovers invariants of braids and flat braids}
\author[A.~Lisitsa]{Alexei Lisitsa}
\address{University of Liverpool, UK}
\email{lisitsa@liverpool.ac.uk}
\author[M.~Salles]{Mateo Salles}
\address{University of Essex, UK}
\email{m.salles@essex.ac.uk}
\author[A.~Vernitski]{Alexei Vernitski}
\address{University of Essex, UK}
\email{asvern@essex.ac.uk}
\begin{document}

\maketitle

\begin{abstract}
    We use machine learning to classify examples of braids (or flat braids) as trivial or non-trivial. Our ML takes form of supervised learning using neural networks (multilayer perceptrons). When they achieve good results in classification, we are able to interpret their structure as mathematical conjectures and then prove these conjectures as theorems. As a result, we find new convenient invariants of braids, including a complete invariant of flat braids. 
\end{abstract}

\section{Introduction and AI background} \label{ML}

Automated discovery of theorems or, in other words, AI-assisted conjectures is an increasingly important direction of research in recent years, see, for example, \cite{davies2021advancing, fawzi2022discovering,vasylenko2021element}. In this paper we report on our experiments with applying neural networks to braids and flat braids; this has led to forming conjectures, which we then were able to prove as Theorems \ref{thm:ces_braid_description}, \ref{thm:cep_braid_description}, \ref{thm:ces_permutation_description}.

We are satisfied with the results of this study, and are pleasantly surprised by them. Indeed, normally, our research concentrates on explainable AI; for instance, if we are speaking about a trivial braid, we are speaking of it in the context of being able to find an untangling sequence of Reidemeister moves for this braid \cite{khan2021untangling,lisitsa2023supervised}. In this study we considered the problem of classifying braids as trivial or non-trivial using neural networks; this approach feels woolly and imprecise in comparison with what we normally do. As expected, neural networks were able to produce only a partial solution. Nevertheless, in some of the experiments the classification produced by the neural network was unexpectedly good, and it made us think that there must be a theorem there, and the entries in the trained neural network formed a distinctive pattern which we were able to re-formulate as a theorem. As a result, imprecise experiments with supervised learning and neural networks have led us to proving exact and unambiguous mathematical results. The lessons that we draw from this study are an inspiration to use supervised learning when looking for conjectures and useful skills that will help us to spot potential conjectures.

An artificial neural network is, in the simplest case, a \emph{perceptron}, that is, the process of producing the output from the input by multiplying the input by a matrix (the entries in this matrix are called \emph{weights}) and then applying a monotonic non-linear function (for example, $0$ if $x \le 0$ and $1$ if $x > 0$) to the numbers in the output. If several perceptrons are applied consecutively one after another, this construction is called a \emph{multilayer perceptron (MLP)}; by \emph{layers} one means the input, the output and the \emph{hidden layers}, that is, the intermediate outputs that serve as inputs for other perceptrons. A slightly more general term \emph{feedforward neural network} is also frequently used in practice with the same meaning as MLP. If one wants to stress that the desired behaviour of a neural network cannot be approximated by a neural network with a small number of layers, one speaks of a \emph{deep neural network}. In this paper we do not use deep neural networks; instead, we use multilayer perceptrons with one hidden layer; this gives us an opportunity to inspect the weights and generalize them in the form of a mathematical conjecture.

Let us provide more machine learning details. In this paper we consider a problem of \emph{supervised learning} of the binary classifiers of some properties of braids and flat braids. 
%we confine ourselves with 
We use  the classical model of multilayer perceptron (MLP) \cite{
%rosenblatt1958perceptron,
perceptron-1961},\cite{rumelhart:errorpropnonote} and its implementation as software called  WEKA Workbench for Data Mining \cite{10.5555/3086818}. 
In terms of machine learning, a MLP is a kind of a feedforward neural network models which supports supervised learning using \emph{backpropagation}  \cite{rumelhart:errorpropnonote}. 
%It is one of the oldest and well-studied models of machine learning, which is also 
It is known to be an \emph{universal approximator} 
\cite{Cybenko,approx}.
MLP implementation in WEKA supports \emph{sigmoid} activation function \cite{sigmoid}.   In  our  experiments for most of the WEKA settings for MLP model  we have used default values\footnote{See precise settings in the Appendix.} with the exception of the number of neurons in the \emph{single} hidden layer (denoted by $H$ in WEKA); we needed to vary $H$ to make the list of weights of the MLP reasonably small to enable us to interpret the numbers. For each of the reported experiments below, we will present the value of $H$. As the measure of performance, we report \emph{weighted average precision} for all experiments conducted.

\section{Braids: definitions and encodings} \label{sec:encodings}

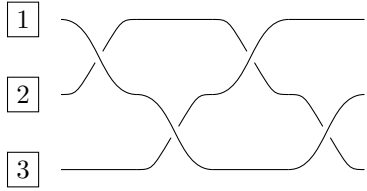
\begin{figure}
\centering
\begin{tikzpicture}[scale=0.50]
\draw (0,0) -- (2,0);
\draw (0,4) .. controls (1,4) and (1,2) .. (2,2);
\draw (0,2) .. controls (0.4,2)  .. (0.9,2.85);
\draw (2,4) .. controls (1.6,4)  .. (1.1,3.15);
\draw (2,2) .. controls (3,2) and (3,0) .. (4,0);
\draw (2,0) .. controls (2.4,0)  .. (2.9,0.85);
\draw (4,2) .. controls (3.6,2)  .. (3.1,1.15);
\draw (2,4) -- (4,4);
\draw (4,0) -- (6,0);
\draw (4,2) .. controls (5,2) and (5,4) .. (6,4);
\draw (4,4) .. controls (4.4,4)  .. (4.9,3.15);
\draw (6,2) .. controls (5.6,2)  .. (5.1,2.85);
\draw (6,0) .. controls (7,0) and (7,2) .. (8,2);
\draw (6,2) .. controls (6.4,2)  .. (6.9,1.15);
\draw (8,0) .. controls (7.6,0)  .. (7.1,0.85);
\draw (6,4) -- (8,4);
\node[draw] at (-1,4) {1};
\node[draw] at (-1,2) {2};
\node[draw] at (-1,0) {3};
\end{tikzpicture}
\caption{An example of a braid to illustrate encodings.}
\label{fig:braid-example}
\end{figure}

This paper concentrates on braids with $3$ strands (unless stated otherwise); an example of such a braid is shown in Figure \ref{fig:braid-example}. We visualize a braid as stretched from the left to the right. Denote positions of strands in the braid by $1, 2, 3$ from the top to the bottom. Recall that the clockwise half-turn swapping the positions of two adjacent strands in positions $i, i+1$ in a braid is denoted by $\sigma_i$ \cite[Section 1.2.4]{kassel2008braid}; thus, the braid in Figure \ref{fig:braid-example} is $\sigma_1 \sigma_2 \sigma_1^{-1} \sigma_2^{-1}$. Denote the number of crossings in a braid or, equivalently, the number of generators $\sigma_i$ featuring in the word describing the braid, by $k$; for example, for the braid in Figure \ref{fig:braid-example} we have $k=4$. In this study we use neural networks to work with braids, therefore, it is convenient to encode braids as matrices. We have considered the following encodings. In each of the encodings, the braid is represented by a matrix containing $k$ columns. Each column in the matrix describes one crossing in the braid, in the same order (from the left to the right) as in the braid. There are exactly $2$ non-zero entries in each column or exactly $1$ non-zero entry in each column; the numbers in the notation for encodings correspond to these values.

\subsubsection*{Encodings based on positions (EP)}

\paragraph{Encoding EP2} The matrix has $3$ rows. Crossings of the type $\sigma_1$, $\sigma_1^{-1}$, $\sigma_2$, $\sigma_2^{-1}$, are denoted by columns $\begin{pmatrix} 1 \\ -1 \\ 0 \end{pmatrix}$, $\begin{pmatrix} -1 \\ 1 \\ 0 \end{pmatrix}$, $\begin{pmatrix} 0 \\ 1 \\ -1\end{pmatrix}$, $\begin{pmatrix} 0 \\ -1 \\ 1\end{pmatrix}$, respectively. Thus, non-zero entries in the matrix indicate the positions of the strands that participate in the crossing and signs of crossings (that is, which strand passes over the crossing, and which strand passes under the crossing). The braid in Figure \ref{fig:braid-example} is encoded as $\begin{pmatrix} 1 & 0 & -1 & 0 \\ -1 & 1 & 1 & -1  \\ 0 & -1 & 0 & 1 \end{pmatrix}$.

\paragraph{Encoding EP1} The matrix has $2$ rows. Crossings of the type $\sigma_1$, $\sigma_1^{-1}$, $\sigma_2$, $\sigma_2^{-1}$, are denoted by columns $\begin{pmatrix} 1 \\ 0 \end{pmatrix}$, $\begin{pmatrix} -1 \\ 0 \end{pmatrix}$, $\begin{pmatrix} 0 \\ 1 \end{pmatrix}$, $\begin{pmatrix} 0 \\ -1 \end{pmatrix}$, respectively. Thus, non-zero entries in the matrix indicate the positions of crossings and their signs. The braid in Figure \ref{fig:braid-example} is encoded as $\begin{pmatrix} 1 & 0 & -1 & 0 \\ 0 & 1 & 0 & -1 \end{pmatrix}$.

\subsubsection*{Encodings based on strands (ES)}

Denote the strands of the braid by $1, 2, 3$, according to their positions on the left-hand side of the braid. For example, strand $1$ in Figure \ref{fig:braid-example} is the curve that begins in position $1$, then moves to position $2$, then to position $3$, and then back to position $2$. 

\paragraph{Encoding ES2} The matrix has $3$ rows, corresponding to strands $1, 2, 3$, respectively. The column describing a crossing contains $1$ in the position corresponding to the strand passing over the crossing and $-1$ in the position corresponding to the strand passing under the crossing. The braid in Figure \ref{fig:braid-example} is encoded as $\begin{pmatrix} 1 & 1 & 0 & 1 \\ -1 & 0 & -1 & -1  \\ 0 & -1 & 1 & 0 \end{pmatrix}$. 

\paragraph{Encoding ES1}  The matrix has $3$ rows; row $1$ (row $2$, row $3$) is used to record crossings of strands $1$ and $2$ (strands $2$ and $3$, strands $3$ and $1$). The entry $1$ (or $-1$) indicates that the former strand passes above (or below) the latter strand. The braid in Figure \ref{fig:braid-example} is encoded as $\begin{pmatrix} 1 & 0 & 0 & 1 \\ 0 & 0 & -1 & 0  \\ 0 & -1 & 0 & 0 \end{pmatrix}$. Note that unlike the other three encodings, which can be easily applied to represent braids with any other number of strands, encoding ES1 does not easily generalize to braids with another number of strands.

\begin{figure}
\centering
\begin{tikzpicture}[scale=0.50]
\draw (0,0) -- (2,0);
\draw (0,2) .. controls (1,2) and (1,4) .. (2,4);
\draw (0,4) .. controls (1,4) and (1,2) .. (2,2);
\draw (2,0) .. controls (3,0) and (3,2) .. (4,2);
\draw (2,2) .. controls (3,2) and (3,0) .. (4,0);
\draw (2,4) -- (4,4);
\draw (4,0) -- (6,0);
\draw (4,2) .. controls (5,2) and (5,4) .. (6,4);
\draw (4,4) .. controls (5,4) and (5,2) .. (6,2);
\draw (6,0) .. controls (7,0) and (7,2) .. (8,2);
\draw (6,2) .. controls (7,2) and (7,0) .. (8,0);
\draw (6,4) -- (8,4);
\node[draw] at (-1,4) {1};
\node[draw] at (-1,2) {2};
\node[draw] at (-1,0) {3};
\end{tikzpicture}
\caption{An example of a flat braid to illustrate encodings.}
\label{fig:flat-braid-example}
\end{figure}
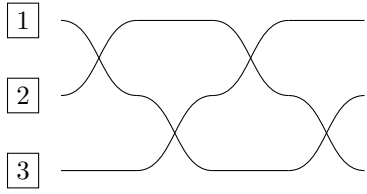

If a permutation is represented as a product of adjacent transpositions, it can be shown using a braid-like diagram called a flat braid. For example, Figure \ref{fig:flat-braid-example} shows a decomposition of permutation $(2 \; 3 \; 1)$ into a product of adjacent transpositions $(1 \; 2)(2 \; 3)(1 \; 2)(2 \; 3)$. All encodings of braids as matrices introduced above can be applied to encoding permutations as matrices, with the only difference that all non-zero entries in the matrices are $1$ instead of either $1$ or $-1$. Thus, for instance, the encoding ES2 of the flat braid in Figure \ref{fig:flat-braid-example} is $\begin{pmatrix} 1 & 1 & 0 & 1 \\ 1 & 0 & 1 & 1  \\ 0 & 1 & 1 & 0 \end{pmatrix}$.

For a given braid, we will denote the matrices encoding this braid in encodings EP2, EP1, ES2, ES1 by $[EP2]$, $[EP1]$, $[ES2]$, $[ES1]$, respectively. 

As to encodings EP2 and EP1, they can be easily transformed into each other; namely, $[EP1] = \begin{pmatrix} 1 & 0 & 0 \\ 0 & 0 & -1  \end{pmatrix} [EP2]$ and $[EP2] = \begin{pmatrix} 1 & 0  \\ -1 & 1 \\ 0 & -1 \end{pmatrix} [EP1]$. Since EP2 and EP1 can be transformed into one another by matrix multiplication, it does not make a difference for an MLP which one to use. Out of EP2 and EP1, we prefer encoding EP1 because it has a slightly smaller matrix (with $2$ rows) and more importantly, using it produces a slightly more readable version of Theorem \ref{thm:cep_braid_description}. Thus, below we use only EP1 and not EP2.

As to encodings ES2 and ES1, it is possible to transform ES1 into ES2 using matrix multiplication, namely, $[ES2] = \begin{pmatrix} 1 & 0 & -1  \\ -1 & 1 & 0 \\ 0 & -1 & 1 \end{pmatrix} [ES1]$. However, it is not possible to transform ES2 into ES1 using matrix multiplication; accordingly, we will see, informally speaking, that ES1 can convey `slightly more information' to the MLP than ES2. We will discuss this difference between ES2 and ES1 in Sections \ref{sec:pure} and \ref{sec:gap-CES2-CES1}. We find ES2 more readable than ES1, since ES2 explicitly refers to specific strands; therefore, when possible, we use ES2; when ES2 is `too weak', we explicitly say that we switch to ES1. 

In computational experiments below, we frequently speak about generating random braids. By this we mean that the number of crossings $k$ is fixed, and then a random braid, in encoding EP1, is produced as a matrix of size $2$ by $k$, in which each column each chosen, at random, out of the following possibilities $\begin{pmatrix} 1 \\ 0 \end{pmatrix}$, $\begin{pmatrix} -1 \\ 0 \end{pmatrix}$, $\begin{pmatrix} 0 \\ 1 \end{pmatrix}$, $\begin{pmatrix} 0 \\ -1 \end{pmatrix}$.

Unlike EP, when one uses ES, one cannot just place randomly chosen columns next to each other to produce a braid, because not every matrix formed of arbitrarily chosen columns permitted in ES2 (or ES1) corresponds to a braid. Let us say that $[ES2]$ (or $[ES1]$) is \emph{realizable} if it corresponds to a braid. For example, $[ES2] = \begin{pmatrix} 1 & 0 \\ 1 & 1 \\ 0 & 1 \end{pmatrix}$ does not correspond to any flat braid, so it is not realizable.

Recall that two braids are equal in the braid group if and only if they can be transformed into each other using two types of operations called the second Reidemeister move (R2) and the third Reidemeister move (R3). (For braids on more than $3$ strands, other types of operations also need to be used in addition to R2 and R3.) Let us remind ourselves what R2 and R3 are. In terms of EP1, R2 means removing from $[EP1]$ or inserting into $[EP1]$ two consecutive columns of the form $\begin{pmatrix} 1 & -1 \\ 0 & 0 \end{pmatrix}$ or $\begin{pmatrix} -1 & 1 \\ 0 & 0 \end{pmatrix}$ or $\begin{pmatrix} 0 & 0 \\ 1 & -1 \end{pmatrix}$ or $\begin{pmatrix} 0 & 0 \\ -1 & 1 \end{pmatrix}$. The move R3 means locating in $[EP1]$ three consecutive columns of the form $\begin{pmatrix} x & 0 & z \\ 0 & y & 0 \end{pmatrix}$ or $\begin{pmatrix} 0 & y & 0 \\ x & 0 & z \end{pmatrix}$ and changing these columns to $\begin{pmatrix} 0 & y & 0 \\ z & 0 & x \end{pmatrix}$ or $\begin{pmatrix} z & 0 & x \\ 0 & y & 0 \end{pmatrix}$, respectively. Note that we are allowed to apply R3 only if $x, y, z$ are not all equal to each other. 

In terms of ES2, R2 means removing from $[ES2]$ or inserting into $[ES2]$ two consecutive columns in which one of the rows is $\begin{pmatrix} 1 & 1 \end{pmatrix}$, another is $\begin{pmatrix} -1 & -1 \end{pmatrix}$, and the third is $\begin{pmatrix} 0 & 0 \end{pmatrix}$. The move R3 means locating in $[ES2]$ three consecutive columns in which one of the rows is $\begin{pmatrix} x & -y & 0 \end{pmatrix}$, another is $\begin{pmatrix} -x & 0 & z \end{pmatrix}$, and the third is $\begin{pmatrix} 0 & y & -z \end{pmatrix}$ and swapping the positions of the first and the third of these columns. Note that we are allowed to apply R3 only if $x, y, z$ are not all equal to each other.

For flat braids, R2 and R3 are defined in the same way, only, obviously, $1$ is used everywhere instead of $-1$, and the condition on $x, y, z$ not being equal to each other is dropped.

A braid is called \emph{trivial} if it is equal to the empty braid (that is, the braid with $k=0$) in the braid group or, equivalently, if it can be transformed into the empty braid using a sequence of applications of R2 and R3. A property of a braid is called an \emph{invariant} if it is preserved by applications of R2 and R3; for example, being trivial is an invariant. 

A flat braid is trivial if and only if the permutation expressed by the flat braid is the identity permutation. There is a natural mapping (actually, a group homomorphism \cite[Section 1.2.4]{kassel2008braid}) from the set of braids to the set of flat braids, acting by ``forgetting'' the signs of the crossings. For example, the flat braid in Figure \ref{fig:flat-braid-example} is the image of the braid in Figure \ref{fig:braid-example}. If the flat braid corresponding to a braid is a trivial flat braid then the braid is called \emph{pure}.

\section{Discovering theorems on braids}

\subsection{Experiments}
In the first series of experiments we considered datasets of random braids and studied learnability 
of the property of braids to be trivial using MLPs in standard setting, as described  in Section \ref{ML}. Note that all datasets we used were balanced, that is, they contained an equal number of trivial and non-trivial braids. 
%\begin{figure}
%\begin{minipage}[c]{0.5\linewidth}
\begin{table}[hbt!]
%\centering
\begin{tabular}{||c c c c||} 
 \hline
 H & 50\% /2-fold  & 67\%/3-fold  & 75\% /4-fold\\ [0.5ex] 
 \hline\hline
 1 & 0.826 & 0.735 & 0.709 \\ 
 \hline
 2 & 0.992 & 0.957 & 0.964 \\
 \hline
 3 & 0.993 & 0.993 & 0.992 \\
 \hline
 4 & 0.991 & 0.972 & 0.992 \\
 \hline
 5 & 0.990 & 0.992 & 0.992 \\ 
 \hline
 a=19 & 0.990 & 0.992 & 0.992 \\ 
 \hline
\end{tabular}
\caption{(3,12)-braids triviality recognition. Encoding ES2. Dataset of 8000 braids.    Weighted Avg. Precision for MLP with $H$ neurons in hidden layer and for k\% training set. 2-fold, 3-fold and 4-fold validation applied. "a" denotes default value of H in WEKA: a = [number of attributes + number classes]/2.    
%All other settings of MLP are default.
}
\label{table:1}
\end{table}
%\end{minipage}
%\end{figure}
Table~\ref{table:1} presents the results of one such experiment. Weighted Average Precision is shown for various numbers $H$ of neurons in a single hidden layer. The dataset of 8000 randomly generated braids (4000 trivial and 4000 non-trivial) on 3 threads and of lengths 12 is used for training.  As one can see from the table, MLP can learn the property with very high precision, in excess of $99\%$ even for small hidden layers containing only $H=2$ neurons. We conducted similar experiments in encoding EP1. The property is also learnable with this encoding,  but accuracy was not so high as can be seen from Table~\ref{table:2} %%{\color{red} AL: please give examples of numbers for EP1 (what used to be e1)}.

\begin{table}[hbt!]
%\centering
\begin{tabular}{||c c c c||} 
 \hline
 H & 50\%/2-fold & 67\%/3-fold & 75\%/4-fold \\ [0.5ex] 
 \hline\hline
 1 & 0.508 & 0.576 & 0.795 \\ 
 \hline
 2 & 0.902 & 0.908 & 0.795 \\
 \hline
 3 & 0.904 & 0.772 & 0.902 \\
 \hline
 4 & 0.898 & 0.908 & 0.904 \\
 \hline
 5 & 0.899 & 0.907 & 0.900 \\ 
 \hline
 a=13 & 0.871
 & 0.889 & 0.890 \\ 
 \hline
\end{tabular}
\caption{(3,12)-braids recognition. Encoding EP1. Dataset of 8000 braids.    Weighted Avg. Precision for MLP with H neurons  in hidden layer and for k\% training set.  3-fold and 4-fold validation applied. %"a" denotes default value of H in WEKA: a = [number of attributes + number classes]/2. 
%{\bf AL To replace data for the case (3,12) braids to be directly comparable with %Table 1}
%All other settings of MLP are default.
}
\label{table:2}
\end{table}
%\end{minipage}
%\end{figure}

Due to the very high accuracy of the MLPs working with encoding ES2, presented in Table \ref{table:1}, we considered the weights of these MLPs trying to find useful mathematical patterns. As we will describe in Section \ref{sec:proofs-for-braids}, we were able to interpret the weights in MLPs classifying braids in encoding ES2 as a conjecture and then prove this conjecture as Theorem \ref{thm:ces_braid_description}. The condition which we extracted from the weights in the MLPs and which is a necessary condition for a braid to be trivial will be denoted by CES2. 

In the second series of experiments we considered datasets of random braids satisfying condition CES2; we will call such datasets CES-datasets. We explored learnability of the property of braids to be trivial for the CES-datasets in encoding EP1. Table~\ref{table:3} presents the results of one such experiment. 

\begin{table}[hbt!]
%\centering
\begin{tabular}{||c c c c||} 
 \hline
 H & 50\% /2-fold  & 67\%/3-fold  & 75\% /4-fold\\ [0.5ex] 
 \hline\hline
 1 & 0.607 & 0.777 & 0.512 \\ 
 \hline
 2 & 0.721 & 0.899 & 0.688 \\
 \hline
 3 & 0.881 & 0.898 & 0.889 \\
 \hline
 4 & 0.875 & 0.897 & 0.889 \\
 \hline
 5 & 0.859 & 0.892 & 0.854 \\ 
 \hline
 a=13 & 0.894 & 0.886 & 0.889 \\ 
 \hline
\end{tabular}
\caption{(3,12)-braids recognition. Encoding EP1. CES-dataset of 2000 braids.    Weighted Avg. Precision for MLP with H n in hidden layer and for k\% training set. 2-fold, 3-fold and 4-fold validation applied. %"a" denotes default value of H in WEKA: a = [number of attributes + number classes]/2.  
%All other settings of MLP are default.
}
\label{table:3}
\end{table}

As we will describe in Section \ref{sec:proofs-for-braids}, we were able to interpret the weights in MLPs classifying braids in CES-datasets in encoding EP1 as a conjecture and then prove this conjecture as Theorem \ref{thm:cep_braid_description}. The condition which we extracted from the weights in the MLPs and which is a necessary condition for a braid to be trivial will be denoted by CEP.

\subsection{Proving the conjectures found by MLPs} \label{sec:proofs-for-braids}

When we considered weights of MLPs in experiments presented in Table \ref{table:1}, we saw that consistently they all have approximately the same absolute value, and their signs, $+$ or $-$, alternate. In this sense, we can say that the computer has formulated a conjecture regarding trivial braids. Below we reformulate this conjecture as Theorem \ref{thm:ces_braid_description} and prove it.

For a number $x$, by $[x]_{m \times n}$ we will denote a matrix of size $m$ by $n$ in which every entry is equal to $x$. By $[\pm]_{m \times n}$ we will denote a matrix of size $m$ by $n$ in which an entry at the position $i, j$ is $1$ (or $-1$) if $i+j$ is even (or odd).

Let $ES$ stand for $[ES2]$ or $[ES1]$. We will say that $[ES]$ satisfies condition CES if the alternating sum of each row is $0$, that is, $[ES][\pm]_{k \times 1} = [0]_{3 \times 1}$. If we need to stress which one of $[ES2]$ or $[ES1]$ is tested, we will use notation CES2 or CES1. 

\begin{theorem} \label{thm:ces_braid_description}
%If a braid is trivial then the alternating sum of each row in encoding e3 is $0$.
Consider a braid on $3$ strands. Let $ES$ stand for $[ES2]$ or $[ES1]$. \\
1) The list of alternating sums of each row, that is, the column $[ES][\pm]_{k \times 1}$ is a braid invariant. \\
2) If a braid is trivial then condition CES is satisfied.
\end{theorem}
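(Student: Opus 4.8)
The plan is to obtain part~2 as an immediate consequence of part~1. Since a trivial braid can be reduced to the empty braid (the one with $k=0$) by a finite sequence of R2 and R3 moves, and the empty braid has $[ES][\pm]_{0 \times 1} = [0]_{3 \times 1}$, the invariance asserted in part~1 forces CES to hold for every trivial braid. Thus all the content lies in part~1: the column $[ES][\pm]_{k \times 1}$ is unchanged by R2 and by R3.

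First I would record that $[\pm]_{k \times 1}$ is the column $(1,-1,1,\dots,(-1)^{k-1})^{T}$, so that $[ES][\pm]_{k \times 1} = \sum_{j=1}^{k}(-1)^{j+1}c_j$ is the signed alternating sum of the columns $c_1,\dots,c_k$ of $[ES]$. The proof then rests on two elementary bookkeeping observations about such alternating sums. (i) If two \emph{identical} columns are inserted (or deleted) at adjacent positions, then every original column beyond the insertion point shifts its index by $2$, which preserves its parity and hence its sign, while the two inserted columns occupy consecutive positions and so receive opposite signs; being equal, they cancel. (ii) If two columns whose positions differ by $2$ are interchanged, they carry the same sign (equal parity) in the alternating sum, so the sum is unaffected, and any column strictly between them stays put.

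Next I would match R2 and R3 to these two moves. By the ES2 description of R2 given above, an R2 move inserts or removes two consecutive columns that are literally identical (one row $(1,1)$, another $(-1,-1)$, the third $(0,0)$), so observation (i) applies. By the ES2 description of R3, an R3 move fixes the middle of three consecutive columns and swaps the outer two, whose positions differ by $2$, so observation (ii) applies; the side condition that $x,y,z$ are not all equal plays no role in the alternating sum. This settles the ES2 case of part~1.

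The step I expect to require the most care is the ES1 case, because the transition matrix relating $[ES2]$ and $[ES1]$ is singular, so invariance of $[ES2][\pm]_{k \times 1}$ does not formally transfer to $[ES1][\pm]_{k \times 1}$. I would instead argue directly at the level of crossings: in an R2 move $\sigma_i^{\pm}\sigma_i^{\mp}$ both crossings are between the same pair of strands with the same strand passing over, so they produce two identical ES1 columns; and in an R3 move the braid relation permutes the three crossings $(a,b),(a,c),(b,c)$ so that the middle one is preserved and the outer two are exchanged with their over/under data unchanged, yielding a swap of the outer two ES1 columns at positions differing by $2$. Hence observations (i) and (ii) apply verbatim to ES1 as well, which completes part~1 for both encodings, and part~2 then follows as noted above.
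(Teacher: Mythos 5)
Your proof is correct and follows essentially the same route as the paper's: direct verification that R2 (inserting or deleting two identical adjacent columns) and R3 (swapping columns whose positions differ by $2$) preserve the alternating column sums, with part~2 then following by transporting the invariant to the empty braid. Your explicit handling of the ES1 case, including the observation that the singular transition matrix forces a direct argument rather than a linear-algebra transfer from ES2, simply fills in details the paper leaves to the reader with the remark that ``the proof for $[ES1]$ can be constructed in the same way.''
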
 
\begin{proof}
1) Considering how R2 and R3 act on $[ES2]$ (see Section \ref{sec:encodings}), we can see that applying R2 or R3 does not affect the alternating sums of the rows. Note that applying R2 changes the value of $k$, making the braid $2$ crossings shorter or $2$ crossings longer, but the alternating sums of the rows are not affected.

2) If a braid is trivial then it can be transformed into the empty braid using R2 and R3. As we established in 1), applying R2 and R3 does not change the alternating sums of the rows, and the alternating sums for the empty braid are $[0]_{3 \times 1}$. Therefore, the alternating sums of the rows are $[0]_{3 \times 1}$ for each trivial braid.

The proof for $[ES1]$ can be constructed in the same way as 1), 2) above. 
\end{proof}

When we considered weights of MLPs in experiments presented in Table \ref{table:3}, we saw that consistently they all have approximately the same  value. In this sense, the computer has formulated another conjecture regarding trivial braids. Below we reformulate this conjecture as Theorem \ref{thm:cep_braid_description} and prove it.

We will say that $[EP1]$ satisfies condition CEP if the sum of all entries in $[EP1]$ is equal to $0$, that is, $[1]_{1 \times 2}[EP1][1]_{k \times 1} = 0$.

\begin{theorem} \label{thm:cep_braid_description} 
%If a braid is trivial then the sum of all entries in encoding e1 is $0$.
Consider a braid on $3$ strands. 
1) The sum of all entries in $[EP1]$, that is, $[1]_{1 \times 3}[ES2][1]_{k \times 1}$ is a braid invariant. \\
2) If a braid is trivial then condition CEP is satisfied.
\end{theorem}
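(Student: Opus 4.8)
The plan is to mirror the proof of Theorem \ref{thm:ces_braid_description}, exploiting the observation that the quantity $[1]_{1\times 2}[EP1][1]_{k\times 1}$ is nothing but the exponent sum (writhe) of the braid. In encoding EP1 the columns $\begin{pmatrix} 1 \\ 0 \end{pmatrix}$ and $\begin{pmatrix} 0 \\ 1 \end{pmatrix}$, corresponding to $\sigma_1$ and $\sigma_2$, each contribute $+1$ to the total sum of entries, while $\begin{pmatrix} -1 \\ 0 \end{pmatrix}$ and $\begin{pmatrix} 0 \\ -1 \end{pmatrix}$, corresponding to $\sigma_1^{-1}$ and $\sigma_2^{-1}$, each contribute $-1$. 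Hence the sum of all entries counts positive crossings minus negative crossings. Once this is noted, part~1 reduces to checking that R2 and R3 leave this count unchanged, and part~2 follows at once.

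First I would verify invariance under R2. From the description of R2 in terms of EP1 given in Section \ref{sec:encodings}, inserting or removing a canceling pair of columns adds or deletes a $\begin{pmatrix} 1 \\ 0 \end{pmatrix}$ together with a $\begin{pmatrix} -1 \\ 0 \end{pmatrix}$ (or the analogous pair in the second row); in every case the two columns contribute $1+(-1)=0$ to the overall sum, so the total is unaffected even though $k$ changes. Next I would treat R3. From the EP1 description, R3 takes three consecutive columns whose nonzero entries form the multiset $\{x,y,z\}$ and rearranges them into three columns whose nonzero entries are again $\{x,y,z\}$, merely moving these entries between the two rows and between positions. Since R3 only permutes the same three values, their sum $x+y+z$ is preserved. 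As R2 and R3 generate equality in the $3$-strand braid group, the sum $[1]_{1\times 2}[EP1][1]_{k\times 1}$ is a braid invariant, which proves part~1.

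For part~2, the empty braid has $k=0$ and hence total sum $0$. A trivial braid can be transformed into the empty braid by a sequence of R2 and R3 moves, each of which preserves the sum by part~1; therefore its sum equals $0$, which is precisely condition CEP. I do not anticipate a genuine obstacle here: the only point requiring care is enumerating all the column shapes that R2 and R3 can take (as listed in Section \ref{sec:encodings}) and confirming that each leaves the multiset of nonzero entries, and hence their sum, invariant. This is entirely routine and directly parallels part~1 of Theorem \ref{thm:ces_braid_description}.
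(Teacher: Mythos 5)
Your proposal is correct and takes essentially the same route as the paper: the paper's proof of this theorem simply states that the argument for $[EP1]$ parallels that for $[ES2]$ in Theorem \ref{thm:ces_braid_description}, i.e., one checks that R2 and R3 (in their EP1 form from Section \ref{sec:encodings}) preserve the sum of all entries and then notes that the empty braid has sum $0$. Your write-up just fills in the details the paper leaves implicit (including the pleasant observation that this sum is the exponent sum, or writhe, of the braid), so there is nothing to correct.
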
 
\begin{proof}
The proof for $[EP1]$ can be constructed in the same way as the proof for $[ES2]$ in Theorem \ref{thm:ces_braid_description}. 
\end{proof}

\section{Why Theorems \ref{thm:ces_braid_description}, \ref{thm:cep_braid_description} are not ``if and only if''}

Conditions CES in Theorem \ref{thm:ces_braid_description} and CEP in Theorem \ref{thm:cep_braid_description} are necessary conditions for a braid being trivial, and they are also ``almost sufficient'' conditions. To give an example for braid length $k=12$, Theorem \ref{thm:ces_braid_description} correctly classifies $99\%$ of braids, and in addition to that, Theorem \ref{thm:cep_braid_description} correctly classifies $85\%$ of the remaining $1\%$ of braids. 

However, after that, MLPs were not able to generate more conjectures which would complement CES and CEP, producing an “if and only if” result describing trivial braids. One simple explanation which might be offered is that MLPs can only calculate linear functions (or relatively simple generalizations of linear functions), and not every function is linear; in particular, perhaps trivial braids cannot be described by linear functions. However, we can offer a more nuanced information-theoretical explanation for not being able to describe trivial braids using MLPs. 

Indeed, suppose a braid on $3$ strands is represented by a matrix $M$. Assume that like in our encodings, that the number of columns in $M$ is $k$, and the entries in $M$ are in the range $-1, 0, 1$. Suppose the number of rows in $M$ is $3$. Suppose that we can multiply the entries of $M$ by some coefficients (as we do in Theorems \ref{thm:ces_braid_description} and \ref{thm:cep_braid_description}), and these coefficients are in the range $-r, -r+1, \dots, r$, and the value of this matrix product tells us if the braid is trivial. In other words, for some linear function $f$ with coefficients $-c, -c+1, \dots, c$, we have $f(M)=0$ if and only if the braid is trivial. Since only braids with even values of $k$ can be trivial, we will assume that $k$ is even. A rule that can decide if a braid $b$ of length $k$ is trivial can also be used to decide if two braids $c, d$ of length $k/2$ are equal in the braid group (by checking if $cd^{-1}$ is trivial). Here, the entries of the matrix corresponding to $c$ are the first $k/2$ columns (let us denote them by $C$), and the entries of the matrix corresponding to $d$ are the last $k/2$ columns (let us denote them by $D$). If we put separately the terms in $f$ referring to $C$ and to $D$, we can rewrite the equality $f(M)=0$ as $g(C)=h(D)$, for some linear functions $g$ and $h$. Now let us consider the total range of possible values of $g$ (and $h$); it is easy to see that the value of $g$ (or $h$) lies between $-3rk/2$ and $3rk/2$. Assuming that $r$ is fixed and does not depend on the value of $k$ (as it is fixed in conditions CES and CEP), we see that the total range of possible values of $g$ (and $h$) grows linearly with $k$. However, the number of classes of equal braids of length $k/2$ (in other words, the growth rate of the braid group $B_3$, or, equivalently, the growth rate of the fundamental group of the trefoil knot) grows faster than linearly\footnote{A simple explicit example of an exponential-size family of braids of length $k$ which are pairwise distinct in $B_3$ can be produced for even values of $k=2m$ by considering words $\prod_{i=1}^m \sigma_{a(i)}^2$ for $a \in \{1, 2\}^m$.} with $k$. 
This is why for some $k$ there are more classes of equal braids of length $k/2$ than the number of possible values of $g$ (or $h$). Therefore, it is impossible to decide if two braids $c$ and $d$ of length $k/2$ are equal in the braid group by checking the equality $g(C)=h(D)$. Therefore, it is impossible to decide if a braid $b$ on $3$ strands of length $k$ is trivial by checking the equality $f(M)=0$. 

The same argument can also be applied to the scenario of using several linear functions $f_1, \dots, f_n$; thus, we can show that one cannot decide if a braid is trivial by checking a fixed number of equalities $f_1(M)=0, \dots, f_n(M)=0$. Hence follows a fact that we summarized in the beginning of this section, that Theorems \ref{thm:ces_braid_description} and \ref{thm:cep_braid_description} used together cannot correctly describe all trivial braids. Also, from this it follows that if we discover, in addition to CES and CEP, finitely many new conditions of the form $f_1(M)=0, \dots, f_n(M)=0$, all these conditions together will form a necessary condition for a braid to be trivial, but not a sufficient condition. 

If the growth rate of a group is not faster than linear then one can hope to find a rule describing trivial elements using an MLP. For example, in this paper in Section \ref{sec:flat} and Section \ref{sec:2-strands} we demonstrate that MLPs successfully form conjectures which correctly describe trivial flat braids on $3$ strands (the group $\mathfrak{S}_3$ is finite, the growth rate is constant) and trivial braids on $2$ strands (the group is infinite cyclic, the growth rate is linear). 

On a related topic, one might ask what is the smallest example of a braid which satisfies conditions CES and CEP but is not trivial. The answer is that such examples are the braids whose closure is the link known as Borromean rings; one such braid is $\sigma_1 \sigma_2^{-1} \sigma_1 \sigma_2^{-1} \sigma_1 \sigma_2^{-1}$.

\section{Discovering theorems on flat braids} \label{sec:flat}

Our success with Theorems \ref{thm:ces_braid_description} and \ref{thm:cep_braid_description} has inspired us to ask if MLPs can also produce conjectures regarding permutations represented as flat braids. As you will see, for permutations (of 3 elements), Theorem \ref{thm:ces_braid_description} can be reformulated as an “if and only if” result, which exactly describes those permutations which are equal to the identity permutation. 

Tables~\ref{table:4} and \ref{table:5}  report on the results of ML applied to the problem of triviality of flat braids on 3 strands using ES2 and EP1 encodings, respectively. 

\begin{table}[hbt!]
%\centering
\begin{tabular}{||c c c c||} 
 \hline
 H & 50\% /2-fold  & 67\%/3-fold  & 75\% /4-fold\\ [0.5ex] 
 \hline\hline
 0 & 1.000 & 1.000 & 1.000\\
 \hline
 1 & 1.000 & 1.000 & 1.000 \\ 
 \hline
 2 & 1.000 & 1.000 & 1.000 \\
 \hline
 3 & 1.000 & 1.000 & 1.000 \\
 \hline
 4 & 1.000 & 1.000 & 1.000 \\
 \hline
 5 & 1.000 & 1.000 & 1.000 \\ 
 \hline
 a=19 & 1.000 & 1.000 & 1.000 \\ 
 \hline
\end{tabular}
\caption{(3,12)-flat braids triviality recognition. Encoding ES2. Dataset of 2000
braids.    Weighted Avg. Precision for MLP with $H$ neurons in hidden layer and for k\% training set. 2-fold, 3-fold and 4-fold validation applied. %"a" denotes default value of H in WEKA: a = [number of attributes + number of classes]/2.  
%All other settings of MLP are default.
}
\label{table:4}
\end{table}

\begin{table}[hbt!]
%\centering
\begin{tabular}{||c c c c||} 
 \hline
 H & 50\% /2-fold  & 67\%/3-fold  & 75\% /4-fold\\ [0.5ex] 
 \hline\hline
 0 & 0.493 & 0.500 & 0.505\\
 \hline
 1 & ? & 0.595 & 0.544 \\ 
 \hline
 2 & 0.533 & 0.723 & 0.655\\
 \hline
 3 & 0.692 & 0.713 & 0.722\\
 \hline
 4 & 0.645 & 0.882 & 0.742 \\
 \hline
 5 & 0.860 & 0.857 & 0.808 \\ 
 \hline
 a=13 & 0.926 & 0.984 & 0.917 \\ 
 \hline
\end{tabular}
\caption{(3,12)-flat braids triviality recognition. Encoding EP1. Dataset of 2000
braids.    Weighted Avg. Precision for MLP with $H$ neurons in hidden layer and for k\% training set. 2-fold, 3-fold and 4-fold validation applied. %"a" denotes default value of H in WEKA: a = [number of attributes + number of classes]/2.  
``?'' denotes an experiment in which  precision for non-trivial braids cannot be computed because they all have been misclassified as trivial.    
%All other settings of MLP are default.
}
\label{table:5}
\end{table}

When we considered weights of MLPs in experiments presented in Table \ref{table:4}, we saw that consistently they all have approximately the same absolute value, and their signs, $+$ or $-$, alternate. (The fact that these weights are the same as those produced by MLPs presented in Table \ref{table:1} is an unexpected coincidence, since numbers in ES2 for braids and ES2 for flat braids are not the same; inceed, in ES2 for braids there is one entry equal to $1$ and one entry equal to $-1$ in each column, whereas in ES2 for flat braids there are two entries equal to $1$ in each column.) Thus, the computer has formulated a conjecture regarding trivial braids. Below we reformulate this conjecture as Theorem \ref{thm:ces_permutation_description} and prove it.

The following observation is useful. 

\begin{lemma} \label{lem:flat}
Consider a flat braid on $3$ strands. Suppose a column $c$ features more than once $[ES2]$, say, in positions $i$ and $j$, where $i < j$. Suppose none of columns in positions between $i$ and $j$ is equal to $c$. Then the difference $j-i$ is odd.
\end{lemma}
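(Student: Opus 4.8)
The plan is to track the one strand that is \emph{not} involved in the crossing $c$. Since the flat braid has $3$ strands, the column $c$ records a crossing of some fixed pair of strands, say $a$ and $b$; write $e$ for the remaining third strand. A $3$-strand flat braid admits only three possible columns in $[ES2]$, one for each of the pairs $\{a,b\}$, $\{a,e\}$, $\{b,e\}$. By hypothesis no column strictly between positions $i$ and $j$ equals $c$, so each such column records a crossing of the pair $\{a,e\}$ or $\{b,e\}$; in particular, every crossing strictly between positions $i$ and $j$ involves the strand $e$.

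First I would record the elementary facts about positions. Reading the flat braid from left to right, at every moment the three strands occupy positions $1,2,3$, and each crossing swaps two strands in adjacent positions, changing the position of each participating strand by exactly $\pm 1$. Whenever $a$ and $b$ cross they must be adjacent, which forces the third strand $e$ to the top or the bottom, i.e.\ $e$ occupies an extreme position, $1$ or $3$. Applying this at the crossings in positions $i$ and $j$ shows that $e$ sits at an extreme position immediately after the crossing at $i$ and immediately before the crossing at $j$.

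Next I would count. Let $N = j - i - 1$ be the number of crossings strictly between positions $i$ and $j$. By the observation above, each of these $N$ crossings involves $e$ and hence changes the position of $e$ by $\pm 1$, while $e$ is not moved by any crossing outside this range. Thus the position of $e$ performs a walk of length $N$ with steps $\pm 1$ that begins and ends in $\{1,3\}$. The net displacement of such a walk is therefore $0$ or $\pm 2$, in either case even; since the length of a $\pm 1$ walk has the same parity as its net displacement, $N$ is even. Consequently $j - i = N + 1$ is odd, as claimed.

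I do not expect a serious obstacle; the argument is essentially complete once these pieces are assembled. The one point that deserves care is the bookkeeping between two viewpoints: columns of $[ES2]$ are indexed by pairs of \emph{strands}, whereas adjacency and the $\pm 1$ steps live in the world of \emph{positions}. The step most worth stating cleanly is therefore the claim that $e$ occupies an extreme position precisely at the two crossings bounding the interval. An alternative route would replace the walk count by tracking the sign of the permutation of the three strands, which also flips at each crossing; but the position-of-$e$ walk seems the most transparent, so that is the version I would write up.
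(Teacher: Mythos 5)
Your proof is correct and takes essentially the same approach as the paper: both arguments track the third strand $e$ (strand $3$ in the paper's notation), note that every crossing strictly between positions $i$ and $j$ must involve it, and conclude by a parity argument on its location---what the paper phrases as strand $3$ entering and leaving the gap between the other two strands an even number of times is exactly your $\pm 1$ walk that begins and ends at an extreme position. Your walk-parity bookkeeping is, if anything, a slightly more formal rendering of the same idea, so there is nothing to correct.
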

\begin{proof}
Say, $c = \begin{pmatrix} 1  \\ 1    \\ 0 \end{pmatrix}$. This column represents an intersection of strand $1$ and strand $2$. Strands $1$ and $2$ cross each other at columns with positions $i$ and $j$, but not between them. Each crossing in columns with positions between $i$ and $j$ is either strand $3$ entering the gap between strands $1$ and $2$ (for example, see the second crossing in the flat braid in Figure \ref{fig:flat-braid-example}) or strand $3$ leaving the gap between strands $1$ and $2$  (for example, see the third crossing in the flat braid in Figure \ref{fig:flat-braid-example}). In total, between column $i$ and column $j$, the number of times strand $3$ enters and leaves the gap between strands $1$ and $2$ must be even, to ensure that strands $1$ and $2$ are adjacent to each other both in column $i$ and in column $j$. Hence, $j-i$ is odd.
\end{proof}

\begin{theorem} \label{thm:ces_permutation_description} 
%A permutation of $3$ elements is the identity permutation if and only if the alternating sum of each row in encoding e3 is $0$.
Consider a flat braid on $3$ strands. Let $ES$ stand for $[ES2]$ or $[ES1]$. \\
1) The list of alternating sums of each row, that is, the column $[ES][\pm]_{k \times 1}$ is a flat braid invariant. \\
2) A flat braid is trivial if and only if condition CES is satisfied.
\end{theorem}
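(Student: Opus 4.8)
Part 1 is the same argument as in Theorem \ref{thm:ces_braid_description}, for both $[ES2]$ and $[ES1]$: one checks that the flat-braid forms of R2 and R3 leave $[ES][\pm]_{k\times1}$ unchanged, because R2 inserts or deletes two equal adjacent columns, which occupy opposite-parity positions and so cancel in every alternating row sum, and R3 only permutes columns within same-parity positions and so preserves each alternating row sum. The forward half of part 2 then follows at once: a trivial flat braid reduces to the empty braid, whose alternating row sums are $[0]_{3\times1}$, and part 1 preserves this column, so CES holds.

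The content is the converse, and the plan is to recover the underlying permutation from $[ES2]$. Every column of $[ES2]$ is one of the three types $\alpha=(1,1,0)^{\top}$, $\beta=(0,1,1)^{\top}$, $\gamma=(1,0,1)^{\top}$, recording a crossing of the strand pair $\{1,2\}$, $\{2,3\}$, $\{1,3\}$ respectively. Writing $A$, $B$, $C$ for the alternating sums of the position-indicators of the $\alpha$-, $\beta$-, $\gamma$-columns, and noting that row $1$ of $[ES2]$ is nonzero exactly on the $\alpha$- and $\gamma$-columns, row $2$ on the $\alpha$- and $\beta$-columns, and row $3$ on the $\beta$- and $\gamma$-columns, condition CES2 becomes the linear system $A+C=0$, $A+B=0$, $B+C=0$, whose unique solution is $A=B=C=0$.

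Here Lemma \ref{lem:flat} does the decisive work of converting each vanishing alternating sum into a parity statement. Fixing one type, say $\alpha$, the lemma says that consecutive $\alpha$-columns lie an odd distance apart, so their positions alternate in parity and the attached signs $(-1)^{p+1}$ alternate as well; hence $A$ equals $0$ precisely when the number of $\alpha$-columns is even. Thus $A=B=C=0$ forces each of the three strand pairs to cross an even number of times. Since the parity of the number of crossings of a pair of strands equals the parity of their inversion in the resulting permutation, no pair is inverted, the permutation is the identity, and the flat braid is trivial. For the $[ES1]$ version the same argument applies, now with each row already isolating a single crossing type; alternatively one may invoke the relation $[ES2]=N\,[ES1]$ with the invertible matrix $N=\bigl(\begin{smallmatrix}1&0&1\\1&1&0\\0&1&1\end{smallmatrix}\bigr)$, which shows CES2 and CES1 are equivalent.

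The main obstacle is exactly the step supplied by Lemma \ref{lem:flat}. Dropping the realizability input it encodes, CES2 by itself only forces the three crossing numbers to share a common parity; the all-odd case then survives and corresponds to the nontrivial order-reversing permutation on three strands. It is the lemma --- the combinatorial observation that strand $3$ must enter and leave the gap between strands $1$ and $2$ an even number of times between consecutive crossings of that pair --- that eliminates this case and upgrades the necessary condition to a sufficient one, in contrast to the genuine failure of sufficiency in the braid setting of Theorem \ref{thm:ces_braid_description}.
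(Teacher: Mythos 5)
Your proof is correct in its main chain, but it takes a genuinely different route from the paper's. You reduce CES to the statement that every pair of strands crosses an even number of times (via the linear system $A+C=A+B=B+C=0$ together with a parity analysis of the alternating sums), conclude that the underlying permutation is the identity, and then invoke the fact, stated in Section 2 of the paper, that a flat braid is trivial if and only if its permutation is the identity. The paper instead gives a self-contained rewriting argument: whenever some column type occurs twice, Lemma \ref{lem:flat} guarantees (after choosing an innermost such pair) that the two occurrences can be brought together by R3 moves and cancelled by R2; iterating this reduces the flat braid to one of the six square-free flat braids, which are inspected directly, and only the empty one satisfies CES. What your approach buys is brevity and a conceptual identification of the invariant with the three inversion parities; what it costs is reliance on the completeness of the Coxeter presentation of $\mathfrak{S}_3$ (identity permutation $\Rightarrow$ reducible to the empty word by R2/R3), which the paper only asserts in passing and which its own proof effectively re-establishes constructively, together with the complete-invariant corollary that follows it.

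One remark in your final paragraph is wrong, although it does not damage the proof itself: you claim that without Lemma \ref{lem:flat} condition CES2 would only force the three crossing numbers to share a common parity, so that the all-odd case (the order-reversing permutation) would survive. In fact $A$ is a sum of $m_\alpha$ terms each equal to $\pm 1$, so $A\equiv m_\alpha \pmod 2$, and $A=B=C=0$ forces all three crossing numbers to be even with no appeal to the lemma at all. The lemma is actually dispensable in your argument: it supplies only the unused converse direction ($m_\alpha$ even $\Rightarrow A=0$). Realizability enters your proof not through Lemma \ref{lem:flat} but through the step equating the crossing parity of a pair of strands with its inversion parity, and through the cited triviality criterion.
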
 
\begin{proof}
The proof is the same for ES2 and ES1; below we use ES2.

Using the same plan of the proof as in Theorems \ref{thm:ces_braid_description}, \ref{thm:cep_braid_description}, one can prove that $[ES2][\pm]_{k \times 1}$ is a braid invariant and in a trivial flat braid, condition CES is satisfied.

Conversely, suppose condition CES is satisfied. Consider a case when a column, say, $c = \begin{pmatrix} 1  \\ 1    \\ 0 \end{pmatrix}$ features more than once in the matrix, say, in positions $i$ and $j$, where $i < j$. We can assume that none of columns in positions between $i$ and $j$ is equal to $c$, and no two columns between $i$ and $j$ are equal to one another; otherwise, change the choice of $i$ and $j$ and $c$, if needed. Since the same two strands (strand 1 and strand 2) cross each other at position $i$ and position $j$, the difference $j-i$ is odd, see Lemma \ref{lem:flat}. If $j-i > 1$, note that columns $i, i+1, i+2$ are pairwise distinct, so we can apply R3 to these three columns, thus shifting column $c$ to position $i+2$. Continue applying R3 as needed to shift the two columns $c$, that is, those columns which are originally in positions $i$ and $j$, towards each other until they are in consecutive positions. Then apply R2 to remove both these columns. 

By repeating the process described above as many times as needed, we end up with a flat braid in which each column features at most once. What flat braid is it? Recall that both the flat braid $[ES2]$ we started from and each of the flat braids we transformed it to satisfy the condition that the alternating sum of each row is $0$. There is only a small number of flat braids in which each column features at most once; they are listed below (in the ES2 encoding). 

$$
\begin{pmatrix}\; \\ \; \\ \; \end{pmatrix},
\begin{pmatrix}1\\1\\0\end{pmatrix},
\begin{pmatrix}0\\1\\1\end{pmatrix},
\begin{pmatrix}0 & 1\\1 & 0\\1 & 1\end{pmatrix},
\begin{pmatrix}1 & 1\\1 & 0\\0 & 1\end{pmatrix},
\begin{pmatrix}1 & 1 & 0\\1 & 0 & 1\\0 & 1 & 1\end{pmatrix} = \begin{pmatrix}0 & 1 & 1\\1 & 0 & 1\\1 & 1 & 0\end{pmatrix}.
$$

By inspecting all these flat braids, we can see that there is only one among them in which the alternating sum of each row is $0$; this is the empty braid. Thus, $[ES2]$ can be transformed into the empty braid using R2 and R3, therefore, $[ES2]$ is trivial. 
\end{proof}

From Theorem \ref{thm:ces_permutation_description} it follows that the list of alternating sums of rows is not only an invariant, but a \emph{complete} invariant of a flat braid, that is, two flat braids are equal as group elements if and only if they have the same list of alternating sums of rows. To say it explicitly, all $6$ possible values of the list of alternating sums of rows in $[ES2]$ representing a flat braid on $3$ strands are as follows (for convenience, we list them in the order corresponding to the list of flat braids above). 

$$
%\begin{equation}\label{eq:columns}
\begin{pmatrix}0\\0\\0\end{pmatrix},
\begin{pmatrix}1\\1\\0\end{pmatrix},
\begin{pmatrix}0\\1\\1\end{pmatrix},
\begin{pmatrix}-1\\1\\0\end{pmatrix},
\begin{pmatrix}0\\1\\-1\end{pmatrix},
\begin{pmatrix}0\\2\\0\end{pmatrix}.
%\end{equation} 
$$
Based on this observation, we can formulate a simple algorithm for testing if a matrix $[ES2]$ is realizable. Let us denote by $[ES2]_m$ the matrix produced from $[ES2]$ by keeping only the leftmost $m$ columns and discarding other columns. If $[ES2]$ is realizable then each $[ES2]_m$ is realizable. If $[ES2]$ is not realizable then there is $m$ such that $[ES2]_{m-1}$ is realizable but $[ES2]_m$ is not realizable, and it is easy to check that the fact that $[ES2]_m$ is not realizable due to having a forbidden last column must be reflected in a ``faulty'' value of the invariant of $[ES2]_m$. Therefore, matrix $[ES2]$ is realizable if and only if for each $m = 1, \dots, k$ the list of alternating sums of $[ES2]_m$ is equal to one of the $6$ columns above.

In keeping with the spirit of this article, we have also checked if MLPs can detect realizable flat braids (in ES2 or ES1). In our experiment, MLPs learned surprisingly well to detect realizable flat braids, but we were not able to produce a meaningful interpretation of their weights, so we could not formulate a mathematical conjecture based on them.

We have also attempted to experiment with flat braids on $4$ strands (in other words, permutations on $4$ elements), and MLPs do not produce any conjectures which we can attempt to prove as theorems. To remind the reader, in flat braids on $4$ elements, apart from moves R2 and R3, one also is expected to use the move that replaces $2$ consecutive columns $\begin{pmatrix}1 & 0\\1 & 0\\0 & 1\\0 & 1\end{pmatrix}$ by $\begin{pmatrix}0 & 1\\0 & 1\\1 & 0\\1 & 0\end{pmatrix}$ or vice versa. Obviously, the alternating sums of rows are not preserved by this move. Thus, if there are invariants of flat braids on $4$ strands, they will have a form completely different to that in Theorem \ref{thm:ces_permutation_description}.

\section{Pure braids} \label{sec:pure}

In Theorem \ref{thm:ces_braid_description} condition CES is shown to be a necessary condition for a braid to be trivial, and in Theorem \ref{thm:ces_permutation_description} condition CES is shown to be equivalent to a flat braid being trivial. Recall that a braid is called pure if the flat braid corresponding to it is a trivial flat braid. If you compare Theorems \ref{thm:ces_braid_description} and \ref{thm:ces_permutation_description}, you might ask if a braid is pure if and only if it satisfies CES. We discuss and answer this question in this section. 

It is not true that every pure braid satisfies CES. Actually, a majority of pure braids do not satisfy CES; indeed, every sixth braid is pure, but only a tiny minority of braids satisfy CES. The simplest specific counterexample is $\sigma_1^2$.

It is not true that if a braid satisfies CES2 then it is pure. The braid $\sigma_1 \sigma_2 \sigma_1$, with $[ES2] = \begin{pmatrix} 1 & 1 & 0  \\ -1 & 0 & 1 \\ 0 & -1 & -1 \end{pmatrix}$ is a counterexample.

However, it is true that if a braid satisfies CES1 then it is pure. The proof of this little fact is unexpectedly involved; we prove it in the following theorem. 

\begin{theorem} \label{thm:pure}
If a braid satisfies condition CES1 then it is a pure braid.
\end{theorem}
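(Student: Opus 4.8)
The plan is to translate condition CES1 into a statement about the \emph{number} of crossings of each pair of strands, and then combine this with the parity information supplied by Lemma~\ref{lem:flat}. First I would record a purely combinatorial description of purity. A braid on $3$ strands is pure exactly when its underlying permutation is the identity, and this happens if and only if every one of the three pairs of strands crosses an even number of times. Indeed, since every crossing swaps two strands occupying \emph{adjacent} positions, the relative (top-to-bottom) order of a given pair of strands is reversed precisely at a crossing of that pair and is left unchanged by all other crossings; hence the two strands of a pair return to their original relative order if and only if they cross an even number of times, and the permutation is the identity if and only if all three pairs do so. In the encoding $[ES1]$ the three rows count the crossings of the three pairs $(1,2)$, $(2,3)$, $(3,1)$, so the goal becomes the following: \textbf{CES1 implies that each row of $[ES1]$ has an even number of non-zero entries.}

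Next, fix a row $r$ and let its non-zero entries sit in columns $j_1<\cdots<j_m$ with signs $\epsilon_1,\dots,\epsilon_m\in\{1,-1\}$; these record the successive crossings of one fixed pair of strands. Applying Lemma~\ref{lem:flat} to the underlying flat braid (where the flat column recording this pair plays the role of the repeated column $c$) shows that consecutive such crossings are separated by an odd gap. Therefore the indices $j_1,\dots,j_m$ alternate in parity, and $(-1)^{j_t+1}=(-1)^{j_1+1}(-1)^{t-1}$ for all $t$.

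Finally I would feed this into CES1. The vanishing of the alternating sum of row $r$ reads $\sum_{t=1}^{m}(-1)^{j_t+1}\epsilon_t=0$, which by the previous identity is equivalent to $\sum_{t=1}^{m}(-1)^{t-1}\epsilon_t=0$. The left-hand side is a sum of $m$ terms each equal to $\pm 1$, so it is an integer congruent to $m$ modulo $2$; being $0$ forces $m$ to be even. Since this holds for every row, each pair of strands crosses an even number of times, and by the first step the braid is pure.

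The step I expect to be the main obstacle is the first one: pinning down the equivalence between purity and the parity of the number of crossings of each pair, and in particular justifying that a crossing changes the relative order of a pair only when that very pair crosses (which relies on crossings acting on adjacent positions). Once purity has been recast as an even-count condition and Lemma~\ref{lem:flat} supplies the alternation of parities, the sign cancellation that finishes the argument is elementary. It is also worth noting that this is exactly where ES1 is needed rather than ES2: the odd-gap lemma is applied separately to the three rows, each of which isolates a single pair of strands, information that ES2 does not record row-by-row in the same clean way.
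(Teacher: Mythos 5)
Your proof is correct, and it takes a genuinely different route from the paper's. The paper argues by induction on the number of crossings, applying Reidemeister moves synchronously to $[ES1]$ and to its flat shadow (denoted $[ESf1]$ there): Lemma~\ref{lem:flat} together with the vanishing alternating sums is used to locate two crossings of the same pair of strands whose $[ES1]$-columns are \emph{equal}, which are then slid together by R3 and cancelled by R2, until the flat braid becomes empty. This forces the paper to work with a carefully delimited class of moves --- wider than legitimate braid moves (a braid R2 cannot cancel two equal columns), yet narrow enough to preserve the alternating sums of $[ES1]$ --- and to track both encodings at once. You bypass the induction and the move bookkeeping entirely: you recast purity as the combinatorial condition that every pair of strands crosses an even number of times, i.e.\ that each row of $[ES1]$ has an even number $m$ of non-zero entries, and then the odd-gap property of Lemma~\ref{lem:flat} makes the weights $(-1)^{j_t+1}$ alternate with $t$, so the alternating row sum is congruent to $m$ modulo $2$ and CES1 forces $m$ to be even. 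Both proofs rest on the same two inputs (Lemma~\ref{lem:flat} and the row-wise alternating sums) and exploit the same parity phenomenon --- the paper uses it to find a cancellable pair of crossings, you use it to count them --- but yours is shorter, non-inductive, and avoids the delicate point about which moves are legal on which object; the paper's version buys constructiveness, exhibiting an explicit sequence of flat Reidemeister moves that untangles the flat shadow, in keeping with the authors' interest in untangling sequences. Your closing observation about why ES1 rather than ES2 is needed (its rows isolate the individual pairs of strands) is also accurate and consistent with the counterexample $\sigma_1\sigma_2\sigma_1$ in Section~\ref{sec:pure}.
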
 
\begin{proof}
For a braid $[ES1]$, let us denote the corresponding flat braid by $[ESf1]$; that is, the difference between $[ESf1]$ and $[ES1]$ is that in $[ESf1]$ all entries which are equal to $-1$ in $[ES1]$ are replaced by $1$. 

The proof will proceed as an expanded version of the proof of Theorem \ref{thm:ces_permutation_description}. We will simplify the braid applying the same Reidemeister moves (that is, Reidemeister moves affecting the same columns) to $[ES1]$ and $[ESf1]$. The type of Reidemeister moves we use will be slightly wider than valid braid Reidemeister moves on $[ES1]$ and slightly narrower than valid flat braid Reidemeister moves on $[ESf1]$. Namely, we will use Reidemeister moves which, on the one hand, preserve the alternating sums of columns of $[ES1]$ and, on the other hand, enable us to simplify $[ESf1]$ to obtain the empty flat braid. By successfully reducing $[ESf1]$ to the empty flat braid we will demonstrate that $[ESf1]$ is a trivial flat braid and, equivalently, $[ES1]$ is pure.

The result is true for the empty braid with $k=0$. Now assume that $k \ge 1$. Then $[ESf1]$ contains a column, say, $c = \begin{pmatrix} 1  \\ 0    \\ 0 \end{pmatrix}$. Since the alternating sum of row $1$ in $[ES1]$ is $0$, there must be at least one other column in $[ESf1]$ which is also equal to $c$. Let us list all column positions of columns which are equal to $c$ in $[ESf1]$; they are $p_1, \dots, p_m$. By Lemma \ref{lem:flat}, the difference $p_{i+1} - p_i$ is odd for each $i$. What would happen if in $[ES1]$ every column at position $p_i$ for odd values of $i$ was $c$ and every column at position $p_i$ for even values of $i$ was $-c$; or what would happen if every column at position $p_i$ for odd values of $i$ was $-c$ and every column at position $p_i$ for even values of $i$ was $c$? Then the alternating sum of row $1$ in $[ES1]$ would be $m$ or $-m$ and not $0$. Therefore, there are an odd $i$ and an even $j$ such that columns in positions $p_i, p_j$ are both equal to $c$ or both equal to $-c$. Therefore, for some $i$, columns in positions $p_i, p_{i+1}$ are both equal to $c$ or both equal to $-c$. Use the move R3 to shift these columns towards each other until they are in consecutive positions; perform the Reidemeister moves synchronously in $[ES1]$ and $[ESf1]$, and ignore the condition in the definition of R3 for braids which does not allow one to use it in a braid when $x, y, z$ are equal to each other. Then apply R2 to remove the columns. Note that condition CES1 remains true for the modified $[ES1]$.

Applying the same process as many times as needed, we find a sequence of applications of R3 and R2 which transforms $[ESf1]$ into the empty flat braid.
\end{proof}

\begin{corollary}
CES1 is a necessary condition for a braid to be trivial and a sufficient condition for a braid to be pure.
\end{corollary}

\section{The gap between CES2 and CES1} \label{sec:gap-CES2-CES1}

The braid $\sigma_1 \sigma_2 \sigma_1$, which we saw in Section \ref{sec:pure}, has $[ES2] = \begin{pmatrix} 1 & 1 & 0  \\ -1 & 0 & 1 \\ 0 & -1 & -1 \end{pmatrix}$ and $[ES2] = \begin{pmatrix} 1 & 0 & 0  \\ 0 & 0 & 1 \\ 0 & -1 & 0 \end{pmatrix}$. Therefore, it is an example of a braid which satisfies condition CES2 but not CES1 (and not CEP). 

\begin{proposition} \label{prop:ES1-to-ES2}
If a braid satisfies condition CES1 then is satisfies condition CES2. \\
\end{proposition}
\begin{proof}
This follows from our observation that $[ES2] = \begin{pmatrix} 1 & 0 & -1  \\ -1 & 1 & 0 \\ 0 & -1 & 1 \end{pmatrix} [ES1]$.
\end{proof}

\begin{lemma} \label{lem:EP1-vs-ES1}
The sum of all entries in $[EP1]$ is equal to the sum of alternating sums of rows of $[ES1]$. In other words, $[1]_{1 \times 2}[EP1][1]_{k \times 1} = [1]_{1 \times 3}[ES1][\pm]_{k \times 1}$
\end{lemma}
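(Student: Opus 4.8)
The plan is to reduce the matrix identity to a statement about individual crossings. Reading off both sides, the left-hand side $[1]_{1 \times 2}[EP1][1]_{k \times 1}$ is the sum of all entries of $[EP1]$; since each column of $[EP1]$ has a single non-zero entry, equal to $+1$ for a crossing $\sigma_i$ and $-1$ for a crossing $\sigma_i^{-1}$, this equals $\sum_{j=1}^{k} \delta_j$, where $\delta_j \in \{+1,-1\}$ is the sign of the $j$-th crossing. Likewise, because each column of $[ES1]$ has a single non-zero entry $\epsilon_j \in \{+1,-1\}$, and the $j$-th entry of $[\pm]_{k \times 1}$ is $(-1)^{j+1}$, the right-hand side equals $\sum_{j=1}^{k} (-1)^{j+1}\epsilon_j$. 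Hence it suffices to establish the per-column identity $\delta_j = (-1)^{j+1}\epsilon_j$ for every crossing.

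First I would relate the positional sign $\delta_j$ to the strand-based sign $\epsilon_j$. Let $\pi_j$ record the arrangement of strands just before crossing $j$, so $\pi_j(p)$ is the label of the strand in position $p$; then $\pi_1$ is the identity and $\pi_{j+1}$ is obtained from $\pi_j$ by transposing the two positions that cross at step $j$. Writing $u_j=\pi_j(i)$ and $v_j=\pi_j(i+1)$ for the strands in the upper and lower crossing positions, the conventions of Section \ref{sec:encodings} say that in a positive crossing $\sigma_i$ the upper strand $u_j$ passes over, while in a negative crossing $\sigma_i^{-1}$ the lower strand $v_j$ passes over; and $\epsilon_j=+1$ exactly when the over-strand precedes the under-strand in the cyclic order $1\to 2\to 3\to 1$. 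Setting $\chi_j=+1$ if the ordered pair $(u_j,v_j)$ is cyclically forward (i.e. $(u_j,v_j)\in\{(1,2),(2,3),(3,1)\}$) and $\chi_j=-1$ otherwise, a short case distinction on the two values of $\delta_j$ gives $\epsilon_j=\delta_j\chi_j$, equivalently $\delta_j=\chi_j\epsilon_j$.

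It then remains to show $\chi_j=(-1)^{j+1}$, which is the heart of the argument. The key observation is that $\chi_j$ depends only on $\pi_j$: running through the six permutations of $\{1,2,3\}$ one checks that for every even permutation both candidate crossing pairs (at position $1$ and at position $2$) are cyclically forward, while for every odd permutation both are cyclically backward. Thus $\chi_j=\operatorname{sgn}(\pi_j)$. Since $\pi_j$ is a product of the $j-1$ transpositions coming from the first $j-1$ crossings, $\operatorname{sgn}(\pi_j)=(-1)^{j-1}=(-1)^{j+1}$. Combining the two steps gives $\delta_j=\chi_j\epsilon_j=(-1)^{j+1}\epsilon_j$, and summing over $j$ yields the claimed equality.

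The hard part will be the sign bookkeeping of the middle step: one must pin down the geometric over/under conventions of the two encodings precisely and verify $\epsilon_j=\delta_j\chi_j$ without error, since any slip in the convention would silently flip a sign. Once the clean reformulation $\chi_j=\operatorname{sgn}(\pi_j)$ is spotted, the decisive parity factor $(-1)^{j+1}$ is immediate, being nothing more than the sign of a product of $j-1$ transpositions, and the remainder of the argument is routine.
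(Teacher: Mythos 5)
Your proof is correct, and it reaches the conclusion by a genuinely different route from the paper's. Both arguments reduce the lemma to a per-crossing identity: the contribution $\delta_j$ of crossing $j$ to the sum of entries of $[EP1]$ must equal its contribution $(-1)^{j+1}\epsilon_j$ to the sum of alternating row sums of $[ES1]$. The difference lies in how the correlation between position parity and strand arrangement is established. The paper does it combinatorially: it proves by induction that the first crossing of strands $1,2$ (respectively $2,3$; $3,1$) can only occur at an odd (odd; even) column, then invokes Lemma \ref{lem:flat} (successive crossings of the same pair of strands are an odd number of columns apart) to propagate the parity pattern, and finally matches contributions case by case for $\pm c_1, \pm c_2, \pm c_3$ at each parity --- with the pair of strands $3,1$ requiring a parity convention opposite to the other two. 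You instead note that the arrangement $\pi_j$ of strands just before crossing $j$ is a product of $j-1$ transpositions, so $\operatorname{sgn}(\pi_j)=(-1)^{j-1}$, and verify over the six elements of $\mathfrak{S}_3$ that this sign alone decides whether the adjacent pairs are cyclically forward or backward ($\chi_j=\operatorname{sgn}(\pi_j)$). This replaces the induction and the appeal to Lemma \ref{lem:flat} by the sign homomorphism, treats all three strand pairs uniformly instead of giving strands $3,1$ special treatment, and makes the origin of the factor $(-1)^{j+1}$ conceptually transparent. What the paper's route buys is economy within the article --- it reuses machinery (Lemma \ref{lem:flat}) already established for Theorem \ref{thm:pure}; what yours buys is a self-contained and arguably cleaner argument. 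Both proofs hinge on the same convention bookkeeping (in $\sigma_i$ the upper strand passes over, and $\epsilon_j=+1$ exactly for cyclically forward over/under pairs), which you have pinned down correctly.
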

\begin{proof}
Let us denote the columns that might feature in [ES1] by $c_1$, $-c_1$, $c_2$, $-c_2$, $c_3$, $-c_3$, where $c_1 = \begin{pmatrix} 1  \\ 0    \\ 0 \end{pmatrix}$, $c_2 = \begin{pmatrix} 0  \\ 1    \\ 0 \end{pmatrix}$, $c_3 = \begin{pmatrix} 0  \\ 0    \\ 1 \end{pmatrix}$. It is easy to prove by induction on the length of the braid that the leftmost column position in which $\pm c_1 (\pm c_2, \pm c_3)$ can feature has an odd (odd, even) position in the braid. As stated in Lemma \ref{lem:flat} and discussed in the proof of Theorem \ref{thm:pure}, each next column featuring $\pm c_1 (\pm c_2, \pm c_3)$ appears an odd number of positions after the previous column featuring $\pm c_1 (\pm c_2, \pm c_3)$. 

Thus, $c_1$ in an odd position contributes $1$ to the alternating sums of $[ES1]$ and, being a clockwise half-turn, also $1$ to $[EP1]$, whereas $c_1$ in an even position contributes $-1$ to the alternating sums of $[ES1]$ and, being an anticlockwise half-turn, also $-1$ to $[EP1]$. As to $-c_1$, $-c_1$ in an odd position contributes $-1$ to the alternating sums of $[ES1]$ and, being an anticlockwise half-turn, also $-1$ to $[EP1]$, whereas $-c_1$ in an even position contributes $1$ to the alternating sums of $[ES1]$ and, being a clockwise half-turn, also $-1$ to $[EP1]$. The same argument also applies to $c_2$ and $-c_2$. 

As to $c_3$ ($-c_3$), recall that it is strand $3$ crossing over (under) strand $1$. Thus, $-c_3$ in an even position (see Figure \ref{fig:braid-example} for an example where $-c_3$ features in an even position) contributes $1$ to the alternating sums of $[ES1]$ and, being a clockwise half-turn, also $1$ to $[EP1]$, whereas $-c_3$ in an odd position contributes $-1$ to the alternating sums of $[ES1]$ and, being an anticlockwise half-turn, also $-1$ to $[EP1]$. Likewise, $c_3$ in an even position contributes $-1$ to the alternating sums of $[ES1]$ and, being an anticlockwise half-turn, also $-1$ to $[EP1]$, whereas $c_3$ in an odd position contributes $1$ to the alternating sums of $[ES1]$ and, being a clockwise half-turn, also $1$ to $[EP1]$.

Hence, the sum of all entries in $[EP1]$ is equal to the sum of alternating sums of rows of $[ES1]$.
\end{proof}

\begin{corollary} \label{cor:CES1-CEP}
If a braid satisfies CES1 then it satisfies CEP.
\end{corollary}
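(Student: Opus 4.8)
The plan is to deduce this immediately from Lemma \ref{lem:EP1-vs-ES1}, which already performs all of the combinatorial bookkeeping. First I would recall what the two conditions assert in matrix form: CES1 says that $[ES1][\pm]_{k \times 1} = [0]_{3 \times 1}$, i.e. the alternating sum of every row of $[ES1]$ vanishes, while CEP says that $[1]_{1 \times 2}[EP1][1]_{k \times 1} = 0$, i.e. the sum of all entries of $[EP1]$ vanishes.

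Next I would observe that CES1 enters only through one trivial implication. If the column $[ES1][\pm]_{k \times 1}$ is the zero column $[0]_{3 \times 1}$, then in particular the sum of its three entries is $0$; symbolically, $[1]_{1 \times 3}[ES1][\pm]_{k \times 1} = [1]_{1 \times 3}[0]_{3 \times 1} = 0$. This is the only place CES1 is used, and it is nothing more than the remark that a sum of zeros is zero. Finally I would invoke Lemma \ref{lem:EP1-vs-ES1}, which identifies the quantity $[1]_{1 \times 2}[EP1][1]_{k \times 1}$ with the quantity $[1]_{1 \times 3}[ES1][\pm]_{k \times 1}$ just shown to be $0$. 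Chaining the two equalities yields $[1]_{1 \times 2}[EP1][1]_{k \times 1} = 0$, which is exactly condition CEP, as required.

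There is essentially no obstacle here: the genuine content of the result already lives in Lemma \ref{lem:EP1-vs-ES1}, where one matches the per-crossing contributions to $[EP1]$ against the alternating-sum contributions to $[ES1]$, carefully tracking the parity of each column position and whether a crossing is a clockwise or an anticlockwise half-turn. Once that identity is in hand, the corollary needs only the observation that a vanishing vector forces the sum of its coordinates to vanish. I would therefore keep the write-up to a single short deduction, citing Lemma \ref{lem:EP1-vs-ES1} for the substantive step.
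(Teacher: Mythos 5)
Your proof is correct and is precisely the deduction the paper intends: the corollary is stated immediately after Lemma \ref{lem:EP1-vs-ES1} with no separate proof, because CES1 gives $[ES1][\pm]_{k \times 1} = [0]_{3 \times 1}$, hence $[1]_{1 \times 3}[ES1][\pm]_{k \times 1} = 0$, and the lemma then yields $[1]_{1 \times 2}[EP1][1]_{k \times 1} = 0$, i.e.\ CEP. Nothing is missing; your write-up simply makes explicit the one-line chaining that the paper leaves implicit.
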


\begin{theorem} \label{thm:CEP-CES2-CES1}
If a braid satisfies CEP and CES2 then it satisfies CES1. 
\end{theorem}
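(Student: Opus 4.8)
The plan is to reduce the whole statement to elementary linear algebra on the single column of alternating row sums of $[ES1]$. Write $s = [ES1][\pm]_{k \times 1}$ for that column, with entries $s_1, s_2, s_3$; condition CES1 is precisely $s = [0]_{3 \times 1}$. First I would re-express CES2 in terms of $s$. By Proposition \ref{prop:ES1-to-ES2} we have $[ES2] = \begin{pmatrix} 1 & 0 & -1 \\ -1 & 1 & 0 \\ 0 & -1 & 1 \end{pmatrix}[ES1]$, and right-multiplying this identity by $[\pm]_{k \times 1}$ gives $[ES2][\pm]_{k \times 1} = \begin{pmatrix} 1 & 0 & -1 \\ -1 & 1 & 0 \\ 0 & -1 & 1 \end{pmatrix} s$. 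Hence CES2, which asserts $[ES2][\pm]_{k \times 1} = [0]_{3 \times 1}$, is equivalent to the three equations $s_1 = s_3$, $s_2 = s_1$, $s_3 = s_2$, that is, to $s_1 = s_2 = s_3$. (Geometrically, the conversion matrix has kernel spanned by the all-ones vector, so CES2 says exactly that $s$ is a multiple of $\begin{pmatrix} 1 & 1 & 1 \end{pmatrix}^{\mathsf T}$.)

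Next I would re-express CEP in terms of $s$. By Lemma \ref{lem:EP1-vs-ES1} the sum of all entries of $[EP1]$ equals $[1]_{1 \times 3}[ES1][\pm]_{k \times 1} = s_1 + s_2 + s_3$, so CEP is equivalent to $s_1 + s_2 + s_3 = 0$. Now I combine the two reformulations. Assuming both CEP and CES2, the latter gives $s_1 = s_2 = s_3$, and substituting this into the former yields $3 s_1 = 0$, hence $s_1 = s_2 = s_3 = 0$. This is exactly CES1, completing the argument.

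There is essentially no obstacle once the two earlier results are invoked: the content of the theorem is that the line $\{s_1 = s_2 = s_3\}$ cut out by CES2 meets the plane $\{s_1 + s_2 + s_3 = 0\}$ cut out by CEP only at the origin. The single point requiring care is that the matrix identity of Proposition \ref{prop:ES1-to-ES2} commutes correctly with right-multiplication by $[\pm]_{k \times 1}$; this is automatic because the conversion matrix acts on the left (on rows) while $[\pm]_{k \times 1}$ acts on the right (on columns), so the associativity $\left(A[ES1]\right)[\pm]_{k \times 1} = A\left([ES1][\pm]_{k \times 1}\right) = As$ applies verbatim.
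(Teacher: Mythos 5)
Your proof is correct and follows essentially the same route as the paper's: both use the conversion matrix from Proposition \ref{prop:ES1-to-ES2} to show that CES2 forces the alternating row sums of $[ES1]$ to be all equal, and then Lemma \ref{lem:EP1-vs-ES1} together with CEP to conclude that their common value is $0$. Your write-up is in fact somewhat cleaner, since the paper's version contains typos (writing $[EP1]$, $[EP2]$ where $[ES1]$, $[ES2]$ are intended), but the underlying argument is identical.
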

\begin{proof}
Suppose $[EP2]$ satisfies CEP. Considering the formula recalculating $[EP1]$ into $[EP2]$ presented in the proof of Proposition \ref{prop:ES1-to-ES2}, we conclude that the alternating sums of rows in $[EP1]$ are equal to each other, that is, have a form $\begin{pmatrix} a  \\ a    \\ a \end{pmatrix}$ for some number a. Hence, using Lemma \ref{lem:EP1-vs-ES1}, we conclude that the sum of entries in $[EP1]$ is equal to $3a$. Since condition CEP is satisfied, $3a=0$, hence, $a=0$, hence, the alternating sums of rows in $[EP1]$ are all equal to $0$.
\end{proof}

\section{Discovering theorems on braids on $2$ strands} \label{sec:2-strands}

%\subsection*{{\color{red}AL} to add experiments with braids on $2$ strands}

The group $B_2$ of braids on $2$ strands is isomorphic to the infinite cyclic group, so we were certain that MLPs can easily learn to classify braids on $2$ strands as trivial or non-trivial both in ES and EP1.  As expected, we were able to train ML models to achieve 100\% precision in many settings. See Tables~\ref{table:6},\ref{table:7} for the results on machine learning using ES2 and EP1 encodings, respectively. 

\begin{table}[hbt!]
%\centering
\begin{tabular}{||c c c c||} 
 \hline
 H & 50\% /2-fold  & 67\%/3-fold  & 75\% /4-fold\\ [0.5ex] 
 \hline\hline
 1 & 0.830 & 0.609 & 0.685 \\ 
 \hline
 2 & 1.000 & 0.723 & 0.881 \\
 \hline
 3 & 1.000 & 1.000 & 0.950 \\
 \hline
 4 & 1.000 & 1.000 & 1.000 \\
 \hline
 5 & 1.000 & 1.000 & 1.000 \\ 
 \hline
 a=13 & 1.000 & 1.000 & 1.000 \\ 
 \hline
\end{tabular}
\caption{(2,12)-braids recognition. Encoding ES2. Dataset of 1000 braids.    Weighted Avg. Precision for MLP with $H$ neurons in hidden layer and for k\% training set. 2-fold, 3-fold and 4-fold validation applied. %"a" denotes default value of H in WEKA: a = [number of attributes + number classes]/2.    
%All other settings of MLP are default.
}
\label{table:6}
\end{table}

\begin{table}[hbt!]
%\centering
\begin{tabular}{||c c c c||} 
 \hline
 H & 50\% /2-fold  & 67\%/3-fold  & 75\% /4-fold\\ [0.5ex] 
 \hline\hline
 1 & 0.631 & 0.635 & 0.595 \\ 
 \hline
 2 & 0.820 & 1.000 & 0.881 \\
 \hline
 3 & 1.000 & 1.000 & 0.899 \\
 \hline
 4 & 1.000 & 1.000 & 1.000 \\
 \hline
 5 & 1.000 & 1.000 & 1.000 \\ 
 \hline
 a=7 & 1.000 & 1.000 & 1.000 \\ 
 \hline
\end{tabular}
\caption{(2,12)-braids recognition. Encoding EP1. Dataset of 1000 braids.    Weighted Avg. Precision for MLP with $H$ neurons in hidden layer and for k\% training set. 2-fold, 3-fold and 4-fold validation applied. %"a" denotes default value of H in WEKA: a = [number of attributes + number classes]/2.    
%All other settings of MLP are default.
}
\label{table:7}
\end{table}

\section{An application: An algorithm for finding trivial braids}

The authors conduct research in applying deep learning to mathematical reasoning, and untangling trivial braids using Reidemeister moves is one of our favorite problems in this area \cite{khan2021untangling,lisitsa2023supervised}. This is why we regularly generate datasets of random trivial braids to use for training or testing our AI agents. We use an implementation of an algorithm for checking triviality which we will denote by AUT; the details of AUT are of minor importance in this paper, but we will briefly present them in the next section\footnote{When we were writing the section, we searched for other implementations of triviality-checking algorithms and could not find any which we could run easily. This is why in this section we show how the results of this paper can be applied to our algorithm AUT.}. 

The following paragraphs summarize the experiments presented in more detail further in this section.

The easiest way to produce a dataset of uniformly sampled trivial braids is by generating random braids, one after another, and checking if they are trivial, until you have found as many trivial braids as you need. Since only a minority of braids are trivial, the task feels like finding the proverbial needle in a haystack. Theorems \ref{thm:ces_braid_description} and \ref{thm:cep_braid_description} provide new necessary conditions for a braid on $3$ strands to be trivial, and these simple conditions can be checked very fast. Therefore, Theorems \ref{thm:ces_braid_description} and \ref{thm:cep_braid_description} can be used to quickly discard a braid if it is non-trivial, before the slower algorithm AUT is applied in those cases when Theorems \ref{thm:ces_braid_description} and \ref{thm:cep_braid_description} do not give a definite answer.

Accuracy of Theorem \ref{thm:ces_braid_description} in detecting trivial braids is much higher than Theorem \ref{thm:cep_braid_description}; this is why the MLPs have found Theorem \ref{thm:ces_braid_description} before \ref{thm:cep_braid_description}. However, encoding ES used in the condition of Theorem \ref{thm:ces_braid_description} takes longer to construct than EP, so in terms of time efficiency, it is slightly better to check the condition CEP from Theorem \ref{thm:cep_braid_description} first, and only if it is satisfied, check the condition CES from Theorem \ref{thm:ces_braid_description}. This algorithm, which we will call CEP-CES-AUT, is faster than the combination CES-CEP-AUT which the MLPs seem to suggest.

Corollary \ref{cor:CES1-CEP} suggests that we can use the combination CES1-AUT, since CES1 implies CEP. However, discarding most non-trivial braids using condition CEP in combination CEP-CES-AUT is faster than building encoding ES1 for all braids and using combination CES1-AUT.

%\subsection{How fast is the new algorithm? MS}

% \subsubsection{Experiment 1}

% We did a time comparison of creating 1000 trivial braids and 1000 non trivial braids for different lengths.

% \begin{table*}[h!] \centering
% \begin{tabular}{|c|c|c|c|c|}
%   \hline
%     Length & AUT & CEP, CES, AUT \\
%     \hline
%     10 & 2.726 & 1.342 \\
%     \hline
%     12 & 4.445 & 1.941 \\
%     \hline
%     14 & 7.402 & 2.730 \\
%     \hline
%     16 & 11.376 & 3.827 \\
%     \hline
%     18 & 16.664 & 5.230 \\
%     \hline
%     20 & 26.506 & 7.002 \\
%     \hline
% \end{tabular}
% \caption{Results experiment 1}\label{tab:accuracy}
% \end{table*}

\subsection{Many braids can be classified using CES or CEP instead of AUT}

Recall that for each braid of length $k$, the majority of braids are non-trivial. As Table \ref{tab:count-braids-checked-by-CEP-CES-AUT} shows, most of the non-trivial braids can be classified as non-trivial using conditions CEP or CES2 or CES1. Looking at this table, it is useful to remember that we proved that $\mathrm{CES1} \Rightarrow \mathrm{CES2}$, $\mathrm{CES1} \Rightarrow \mathrm{CEP}$, $\mathrm{CEP \; \& \; CES2} \Leftrightarrow \mathrm{CES1}$, and the triviality implies all CEP, CES2 and CES1, see Theorem \ref{thm:ces_braid_description}, Theorem \ref{thm:cep_braid_description}, Proposition \ref{prop:ES1-to-ES2}, Corollary \ref{cor:CES1-CEP} and Theorem \ref{thm:CEP-CES2-CES1}.

For this experiment we generated 10,000 random braids of the same length on $3$ strands and checked conditions CEP, CES2, CES1, CEP \& CES2, and also triviality (using algorithm AUT) We repeated the experiment for lengths $10, \dots, 50$. 

\begin{table} \centering
\begin{tabular}{|c|c|c|c|c|c|c|}
  \hline
  Length & CEP & CES2 & CES1 & CEP \& CES2 & AUT & Time (s) \\
  \hline
    10 & 2475 & 379 & 301 & 301 & 262 & 1.76 \\ \hline
    20 & 1731 & 168 & 103 & 103 & 64 & 2.91 \\ \hline
    30 & 1530 & 110 & 47 & 47 & 15 & 5.93 \\ \hline
    40 & 1273 & 90 & 36 & 36 & 11 & 25.19 \\ \hline
    50 & 1102 & 69 & 24 & 24 & 5 & 147.68\\  
  \hline
\end{tabular}
\caption{How many braids, out of 10,000, satisfy conditions CEP, CES2, CES1, CEP \& CES2, and how many are trivial (checked by algorithm AUT). The column `Time' shows the total time in seconds.}\label{tab:count-braids-checked-by-CEP-CES-AUT}
\end{table}

\subsection{Using CES and CEP makes checking faster}

In this experiment we generate 10,000 random braids and then for each braid we check if condition CEP is satisfied. If CEP is satisfied, condition CES2 is checked for this braid. Then, if CES2 is also satisfied, the braid is further checked using AUT. In Table \ref{tab:time-braids-checked-by-CEP-CES-AUT}, the columns CEP (CES, AUT) refer to the number of braids that satisfy CEP (satisfy both CEP and CES, are found to be trivial by AUT). As you can see, only a minority of braids, about $0.2\%$--$3\%$ of braids need to be checked using AUT; indeed, we have already seen similar numbers in Table \ref{tab:count-braids-checked-by-CEP-CES-AUT}. The columns T CEP (T CES2, T AUT) correspond to the total time taken to check CEP (or CES2, or run AUT) for those braids for which these checks were conducted. The column TT contains the total time taken to check the triviality of the 10,000 braids. %The column CES1 is comparable to column CES2.

As you can see, the total time in Table \ref{tab:time-braids-checked-by-CEP-CES-AUT} is considerably smaller than the total time in Table \ref{tab:count-braids-checked-by-CEP-CES-AUT}, because AUT is only run for a minority of braids and not all braids. 

As you scan the columns of Table \ref{tab:time-braids-checked-by-CEP-CES-AUT}, you can see that the proportion of braids that need to be checked using AUT (column CES2) becomes smaller as the braid length $k$ grows, but the time taken to run AUT on these longer braids becomes larger (column T AUT).

\begin{table} \centering
\begin{tabular}{|c|c|c|c|c|c|c|c|c|c|}
    \hline
    Length & CEP & CES2 & AUT & T CEP & T CES2 & T AUT & TT \\
    \hline
    10 & 2456 & 279 & 248 & 0.029 & 0.057 & 0.011 & 0.097 \\ \hline
    20 & 1764 & 95 & 55 & 0.051 & 0.076 & 0.011 & 0.138 \\ \hline
    30 & 1456 & 54 & 20 & 0.073 & 0.092 & 0.022 & 0.186 \\ \hline
    40 & 1252 & 34 & 8 & 0.094 & 0.103 & 0.064 & 0.262 \\ \hline
    50 & 1122 & 24 & 3 & 0.116 & 0.115 & 0.741 & 0.972 \\ 
    \hline
\end{tabular}
\caption{The number of braids considered and time spent when checking triviality using combination CEP-CES-AUT}\label{tab:time-braids-checked-by-CEP-CES-AUT}
\end{table}

\subsection{CEP-CES-AUT is the fastest combination}

Theorem \ref{thm:ces_braid_description}, Theorem \ref{thm:cep_braid_description}, Proposition \ref{prop:ES1-to-ES2}, Corollary \ref{cor:CES1-CEP} and Theorem \ref{thm:CEP-CES2-CES1} suggest several sequences in which CEP, CES1, CES2 and AUT can be combined to check the triviality, including AUT on its own, CEP followed by AUT, CES1 followed by AUT, CES2 followed by CEP followed by AUT, and CEP followed by CES2 followed by AUT. In the graph in Figure \ref{fig:CEP-CES-AUT-is-fastest} we compare how fast these implementations are, and we see that combination CEP-CES2-AUT is the fastest. 

For this experiment, we use each of the algorithms from the list above, for each value of braid length $k$ in the range $10, 14, \dots, 34$, to produce a dataset consisting of $1000$ trivial braids and $1000$ non-trivial braids. 

The line corresponding to CES1-AUT is only marginally higher than the line corresponding to CES2-CEP-AUT. This is because CES1 is equivalent to CES2 \& CEP, the time needed for producing encoding ES1 is practically the same (just slightly longer, in our implementation) as the time needed for producing encoding ES2, and hardly any braids need to be checked for CEP after CES2 has been checked.

\begin{figure}
  \centering
  \begin{subfigure}{0.49\textwidth}
    \includegraphics[width=\linewidth]{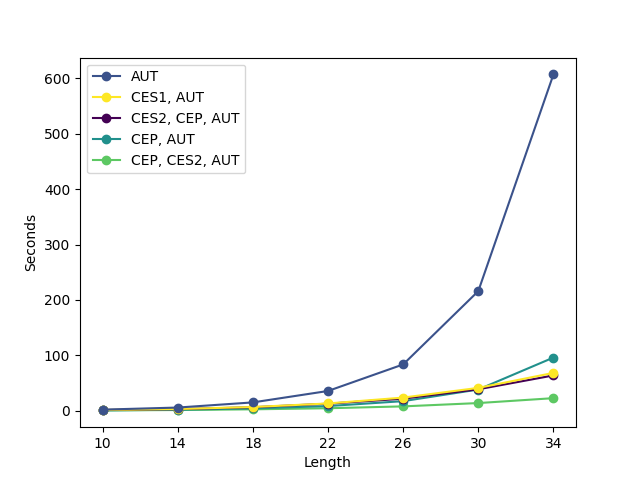}
    \caption{Linear scale}
    \label{fig:exp_3_lin}
  \end{subfigure}
  % \hfill
  \begin{subfigure}{0.48\textwidth}
    \includegraphics[width=\linewidth]{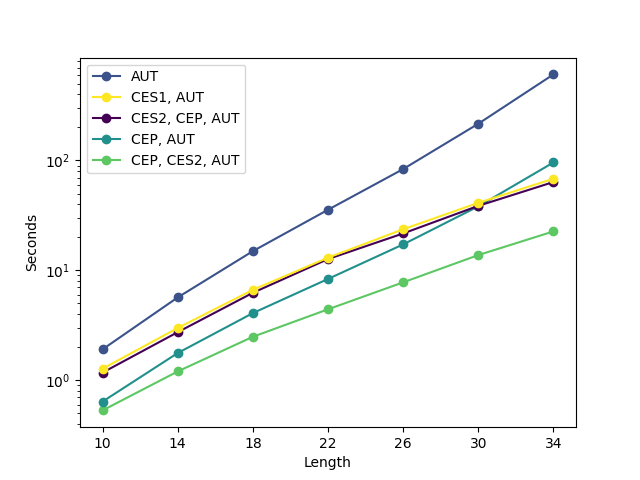}
    \caption{Log scale}
    \label{fig:exp_3_log}
  \end{subfigure}
  \caption{Out of several ways of checking triviality, CEP-CES2-AUT is the fastest.}
  \label{fig:CEP-CES-AUT-is-fastest}
\end{figure}

% \begin{table*}[h!] \centering
% \begin{tabular}{|c|c|c|c|c|}
%   \hline
%   Length & CEP, CES, AUT & CEP, AUT & CES, CEP, AUT & AUT \\
%   \hline
%   10 & 1.423 & 1.769 & 2.023 & 3.372 \\
%   \hline
%   12 & 2.012 & 2.889 & 3.220 & 6.972 \\
%   \hline
%   14 & 2.861 & 4.894 & 4.671 & 14.142 \\
%   \hline
%   16 & 4.069 & 7.707 & 6.549 & 31.357 \\
%   \hline
%   18 & 5.804 & 17.590 & 9.263 & 75.404 \\
%   \hline
%   20 & 8.003 & 36.608 & 13.107 & 204.469 \\
%   \hline
% \end{tabular}
% \caption{Results (in seconds) order of conditions}\label{tab:accuracy}
% \end{table*}

\subsection{Comparing AUT and CEP-CES-AUT on individual braids}

In this experiment we generated 10,000 random braids on length $20$ on $3$ strands. We checked condition AUT on every braid and recorded the time taken by AUT on this individual braid. These times are presented by the green bar chart in Figure \ref{fig:bar-charts}, with the time (the horizontal axis) on the log scale; to present the data more clearly, we show the same bar chart two times on different scales, with a smaller size in (a) and with a magnified size in (b). On the other hand, we tested the triviality by checking conditions CEP, CES, AUT on the same braids and recorded the time for each braid. These times are presented on the same bar chart in blue. 

Thus, the green chart shows a typical time it takes to check if a braid is trivial using AUT, which is about $10^{-4}$. The blue chart shows a typical time it takes to check if a braid is trivial using CEP-CES-AUT. If you look at part (a), the high blue spike on the left-hand side, that is, at about $5\cdot10^{-6}$ on the horizontal coordinate, correspond to braids classified by CEP. The next blue spike, at about $5\cdot10^{-5}$, corresponds to braids classified by CES. Then the small blue hump at about $10^{-4}$ (it is easier to spot it in part (b)) corresponds to the braids which CEP and CES could not classify; to this minority of braids, AUT is applied. 

\begin{figure}
  \centering
  \begin{subfigure}{0.49\textwidth}
    \includegraphics[width=\linewidth]{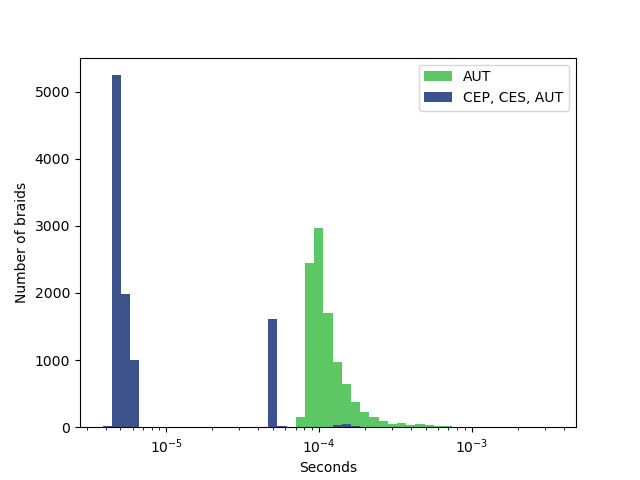}
    \caption{A plot showing all the data}
    \label{fig:exp_4_full}
  \end{subfigure}
  % \hfill
  \begin{subfigure}{0.48\textwidth}
    \includegraphics[width=\linewidth]{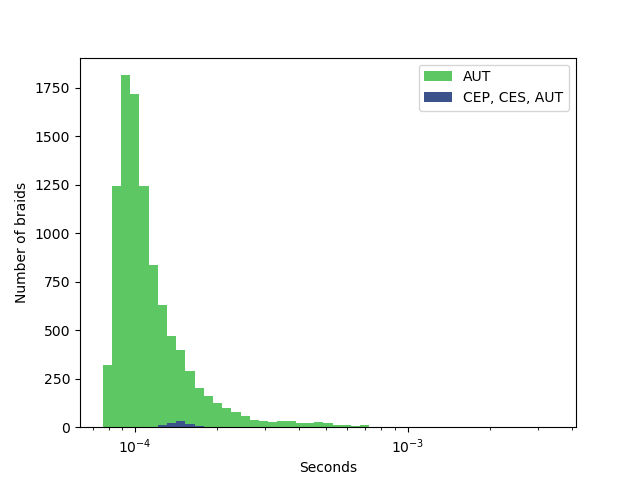}
    \caption{A magnified fragment of (a)}
    \label{fig:exp_4_zoom}
  \end{subfigure}
  \caption{Time to check the triviality taken by AUT (in green) and by CEP-CES-AUT (in blue).}
  \label{fig:bar-charts}
\end{figure}

\subsection{How much search it takes to find trivial braids}

In this experiment we produce a dataset consisting of $1,000$ pairwise distinct trivial braids. The aim of this exercise is to show how many braids one had to consider to find $1,000$ trivial braids, and how well our new algorithm CEP-CES-AUT copes with this task in terms of time. Table \ref{tab:produce-dataset} shows how many braids we had to explore in total (column EXP) before we found $1,000$ trivial braids, how many of these braids satisfied condition CEP (column CEP), how many of these braids satisfied both CEP and CES and were then checked using AUT (column CES), and how many of these braids were found trivial by AUT (column AUT). For example, in order to produce $1,000$ pairwise distinct trivial braids of length $10$ we had to consider $42,999$ random braids, out of which $10,515$ satisfied CEP, $1,126$ satisfied CEP and CES, and $1,021$ were found trivial by AUT; out of these $1,021$ braids, $21$ braids were discarded because their duplicates had already been added to the dataset. 

\begin{table} \centering
\begin{tabular}{|c|c|c|c|c|c|c|c|c|c|}
    \hline
    Length & EXP & CEP & CES & AUT & Time \\
    \hline
    10 & 42,999 & 10,515 & 1,126 & 1,021 & 0.574 \\ \hline
    20 & 167,694 & 29,596 & 1,677 & 1,000 & 3.306 \\ \hline
    30 & 515,663 & 74,347 & 2,737 & 1,000 & 13.689 \\ \hline
    40 & 1,241,628 & 155,568 & 4,394 & 1,000 & 43.899 \\ \hline
    50 & 2,796,519 & 313,277 & 6,941 & 1,000 & 155.987 \\ \hline
\end{tabular}
\caption{How many braids need to be explored to find $1,000$ trivial braids}\label{tab:produce-dataset}
\end{table}

\section{Details of algorithm AUT} \label{sec:AUT}

Algorithm AUT is based on Artin’s solution of the word problem in the braid group via automorphisms of a free group (this is why we denote this algorithm by AUT). Algorithm AUT is exponential in the worst case, but it is fast in practice; for example, we used AUT to produce all words representing trivial braids on $3$ strands up to length $k=16$, see https://oeis.org/A354602. We are aware that there are algorithms for checking if a braid is trivial which have a better algorithmic complexity, polynomial instead of exponential, for example \cite{birman1998new},  but in our research practice, AUT works sufficiently fast, and in the context of this study, our conclusions regarding combining AUT with CEP and CES can be applied to any other algorithm instead of AUT.

The differences between our algorithm AUT and a naive implementation of Artin's solution of the word problem are as follows. 

We do not use Artin's automorphisms of the free group, but automorphisms introduced in Theorem 1 (automorphism type 2) in \cite{shpilrain2001representing}. The benefit of using these automorphisms is that in each word considered, each letter at an odd-numbered position is one of generators of the free group, and each letter at an even-numbered position is the inverse of one of generators  of the free group; this useful property enables us to write especially simple computer code for simplifying words to reduced words. The way this type of automorphisms is used in AUT can be further improved using algebraic structures known as \emph{keis} or \emph{involutory quandles}. In the past, the authors of this paper studied keis \cite{fish2014detecting,fish2015combinatorial,lisitsa2017automated,vernitski2018dihedral}; keis can be informally described as groups considered with the operation $x \triangleright y = y x^{-1} y$ instead of the usual group operation. The automorphisms from \cite{shpilrain2001representing} can be usefully represented as kei terms instead of group words, and these kei terms are considerably shorter than the corresponding group words (although they are still exponentially long in the worst case; the length of the kei terms grows, in the worst case, as Fibonacci numbers instead of $2^k$ for group words). 

Instead of using kei terms, we obtain approximately the same effect in AUT by judiciously using dichotomy to ensure that we get an opportunity to simplify group words as soon whenever possible, before they become very long. Namely, when dealing with a word $b$ in the braid group, we decompose it into a product of words $b=cd$, where $c, d$ have an equal (or almost equal) length, compute the automorphisms induced by $c$ and $d$, inspect the description of these automorphisms and simplify the words in the free group to reduced words, and then multiply the automorphisms corresponding to $c$ and $d$ to produce the automorphism corresponding to $b$.

\section*{Acknowledgements}
The work was supported by the Leverhulme Trust Research Project Grant RPG-2019-313.

\bibliographystyle{abbrv}
\bibliography{main}

\begin{thebibliography}{10}

\bibitem{birman1998new}
J.~Birman, K.~H. Ko, and S.~J. Lee.
\newblock A new approach to the word and conjugacy problems in the braid
  groups.
\newblock {\em Advances in Mathematics}, 139(2):322--353, 1998.

\bibitem{Cybenko}
G.~Cybenko.
\newblock {Approximation by superpositions of a sigmoidal function}.
\newblock {\em Mathematics of Control, Signals, and Systems (MCSS)},
  2(4):303--314, Dec. 1989.

\bibitem{davies2021advancing}
A.~Davies, P.~Veli{\v{c}}kovi{\'c}, L.~Buesing, S.~Blackwell, D.~Zheng,
  N.~Toma{\v{s}}ev, R.~Tanburn, P.~Battaglia, C.~Blundell, A.~Juh{\'a}sz,
  et~al.
\newblock Advancing mathematics by guiding human intuition with ai.
\newblock {\em Nature}, 600(7887):70--74, 2021.

\bibitem{fawzi2022discovering}
A.~Fawzi, M.~Balog, A.~Huang, T.~Hubert, B.~Romera-Paredes, M.~Barekatain,
  A.~Novikov, F.~J. R~Ruiz, J.~Schrittwieser, G.~Swirszcz, et~al.
\newblock Discovering faster matrix multiplication algorithms with
  reinforcement learning.
\newblock {\em Nature}, 610(7930):47--53, 2022.

\bibitem{fish2014detecting}
A.~Fish and A.~Lisitsa.
\newblock Detecting unknots via equational reasoning, i: Exploration.
\newblock In {\em Intelligent Computer Mathematics: International Conference,
  CICM 2014, Coimbra, Portugal, July 7-11, 2014. Proceedings}, pages 76--91.
  Springer, 2014.

\bibitem{fish2015combinatorial}
A.~Fish, A.~Lisitsa, and D.~Stanovsk{\`y}.
\newblock A combinatorial approach to knot recognition.
\newblock In {\em Embracing Global Computing in Emerging Economies: First
  Workshop, EGC 2015, Almaty, Kazakhstan, February 26-28, 2015. Proceedings 1},
  pages 64--78. Springer, 2015.

\bibitem{sigmoid}
J.~Han and C.~Moraga.
\newblock The influence of the sigmoid function parameters on the speed of
  backpropagation learning.
\newblock In J.~Mira and F.~S. Hernández, editors, {\em IWANN}, volume 930 of
  {\em Lecture Notes in Computer Science}, pages 195--201. Springer, 1995.

\bibitem{approx}
K.~Hornik, M.~Stinchcombe, and H.~White.
\newblock Multilayer feedforward networks are universal approximators.
\newblock {\em Neural Networks}, 2(5):359--366, 1989.

\bibitem{kassel2008braid}
C.~Kassel and V.~Turaev.
\newblock {\em Braid groups}, volume 247.
\newblock Springer Science \& Business Media, 2008.

\bibitem{khan2021untangling}
A.~Khan, A.~Vernitski, and A.~Lisitsa.
\newblock Untangling braids with multi-agent q-learning.
\newblock In {\em 2021 23rd International Symposium on Symbolic and Numeric
  Algorithms for Scientific Computing (SYNASC)}, pages 135--139. IEEE, 2021.

\bibitem{lisitsa2023supervised}
A.~Lisitsa., M.~Salles., and A.~Vernitski.
\newblock Supervised learning for untangling braids.
\newblock In {\em Proceedings of the 15th International Conference on Agents
  and Artificial Intelligence - Volume 3: ICAART}, pages 784--789. INSTICC,
  SciTePress, 2023.

\bibitem{lisitsa2017automated}
A.~Lisitsa and A.~Vernitski.
\newblock Automated reasoning for knot semigroups and $\pi$-orbifold groups of
  knots.
\newblock In {\em Mathematical Aspects of Computer and Information Sciences:
  7th International Conference, MACIS 2017, Vienna, Austria, November 15-17,
  2017, Proceedings}, pages 3--18. Springer, 2017.

\bibitem{perceptron-1961}
F.~Rosenblatt.
\newblock {\em Principles of Neurodynamics: Perceptrons and the Theory of Brain
  Mechanisms}.
\newblock Spartan Books, 1961.

\bibitem{rumelhart:errorpropnonote}
D.~E. Rumelhart, G.~E. Hinton, and R.~J. Williams.
\newblock Learning internal representations by error propagation.
\newblock In D.~E. Rumelhart and J.~L. Mcclelland, editors, {\em Parallel
  Distributed Processing: Explorations in the Microstructure of Cognition,
  {V}olume 1: {F}oundations}, pages 318--362. MIT Press, Cambridge, MA, 1986.

\bibitem{shpilrain2001representing}
V.~Shpilrain.
\newblock Representing braids by automorphisms.
\newblock {\em International Journal of Algebra and Computation},
  11(06):773--777, 2001.

\bibitem{vasylenko2021element}
A.~Vasylenko, J.~Gamon, B.~B. Duff, V.~V. Gusev, L.~M. Daniels, M.~Zanella,
  J.~F. Shin, P.~M. Sharp, A.~Morscher, R.~Chen, et~al.
\newblock Element selection for crystalline inorganic solid discovery guided by
  unsupervised machine learning of experimentally explored chemistry.
\newblock {\em Nature communications}, 12(1):5561, 2021.

\bibitem{vernitski2018dihedral}
A.~Vernitski, L.~Tunsi, C.~Ponchel, and A.~Lisitsa.
\newblock Dihedral semigroups, their defining relations and an application to
  describing knot semigroups of rational links.
\newblock {\em Semigroup Forum}, 97(1):75--86, 2018.

\bibitem{10.5555/3086818}
I.~H. Witten, E.~Frank, M.~A. Hall, and C.~J. Pal.
\newblock {\em The WEKA Workbench. Online Appendix for Data Mining, Fourth
  Edition: Practical Machine Learning Tools and Techniques}.
\newblock Morgan Kaufmann Publishers Inc., San Francisco, CA, USA, 4th edition,
  2016.

\end{thebibliography}
\section*{Appendix}
 Default settings of MLP in  WEKA Workbench (version 3.8.3): 
%\cite{MLP}: 

\verb"L = 0.3" \emph{(learning rate)}

\verb"M = 0.2" \emph{(momentum rate)} 

\verb"N = 500" \emph{(number of epochs to train)} 

\verb"V = 0" \emph{(percentage size of validation set to use to terminate training)}

\verb"S = 0" \emph{(seed for Random Number Generator)}

\verb"E = 20" \emph{(threshold for number of consecutive errors to use to terminate training)}

%\verb"H = a" \emph{(one hidden layer with (attribs + classes) / 2  many nodes)}
\end{document}